\numberwithin{equation}{section}
\patchcmd{\thesubsection}{\arabic}{\arabic}{}{}
\patchcmd{\@seccntformat}{\@secnumfont}{%
  \@secnumfont\expandafter\protect\csname format#1\endcsname}{}{}
\patchcmd{\@startsection}{\@afterindenttrue}{\@afterindentfalse}{}{}
\patchcmd{\subsection}{-.5em}{.3\linespacing}{}{}
\theoremstyle{plain}
\newtheorem{theorem}{Theorem}[section]
\newtheorem{proposition}[theorem]{Proposition}
\newtheorem{definition}[theorem]{Definition}
\newtheorem{lemma}[theorem]{Lemma}
\newtheorem{corollary}[theorem]{Corollary}
\theoremstyle{remark}
\newcommand{\Ker}[1]{\ensuremath{\mathrm{Ker} (#1)}}
\newcommand{\Pic}[1]{\ensuremath{\mathrm{Pic} (#1)}}
\newcommand{\At}[1]{\ensuremath{\mathrm{At} (#1)}}
\newcommand{\Aut}[1]{\ensuremath{\mathrm{Aut} (#1)}}
\newcommand{\ad}[1]{\ensuremath{\mathrm{ad}  (#1)}}
\newcommand{\ENd}[1]{\ensuremath{\mathrm{End}  (#1)}}
\newcommand{\Img}[1]{\ensuremath{\mathrm{Im} (#1)}}
\newcommand{\cat}[1]{\ensuremath{\mathcal{#1}}}
\renewcommand{\dim}[2][]{\ensuremath{\mathrm{dim}_{#1}(#2)}}
\newcommand{\V}{\ensuremath{\mathbb{V}}}
\newcommand{\W}{\ensuremath{\mathbb{W}}}
\newcommand{\M}{\ensuremath{\mathbb{M}}}
\newcommand{\Z}{\ensuremath{\mathbb{Z}}}
\newcommand{\A}{\ensuremath{\mathbb{A}}}
\newcommand{\p}{\ensuremath{\mathbf{P}}}
\newcommand{\C}{\ensuremath{\mathbb{C}}}
\newcommand{\struct}[1]{\ensuremath{\mathcal{O}_{#1}}}
\newcommand{\DifF}[3][]{%
  \ensuremath{\mathcal{D}\mathit{iff}^{#1}(#2, #3)}}
\newcommand{\coh}[3]{\ensuremath{\mathrm{H}^{#1}(#2,#3)}}
\begin{document}

\title[Moduli space of $\lambda$-connections]{On the Moduli space of $\lambda$-connections }

\author{Anoop Singh}
\address{Harish-Chandra Research Institute, HBNI \\ Chhatnag Road \\ Jhusi \\
  Prayagraj 211~019 \\ India}
  \email{anoopsingh@hri.res.in}

\subjclass[2010]{14D20, 14C22, 14E05, 14J50}
  \keywords{$\lambda$-connection, Moduli space, Picard 
  group, algebraic function, automorphism}
\begin{abstract}
Let $X$ be a compact Riemann surface of genus $g \geq 
3$. Let $\cat{M}_{Hod}$ 
denote the moduli space of stable $\lambda$-connections over $X
$ and $\cat{M}'_{Hod} 
\subset \cat{M}_{Hod}$ denote the subvariety whose
underlying vector bundle is stable. Fix a line
bundle $L$ of degree zero. Let $\cat{M}_{Hod}(L)$
denote the moduli space of stable $\lambda$-connections with fixed determinant 
$L$ and $\cat{M}'_{Hod}(L) \subset \cat{M}_{Hod}(L)$
be the subvariety whose underlying  vector bundle
is stable. We show that there is a natural 
compactification of $\cat{M}'_{Hod}$ and $\cat{M}'_{Hod} 
(L)$, and study their Picard groups.
Let $\M_{Hod}(L)$ denote the moduli space
of polystable $\lambda$-connections. We investigate the 
nature of algebraic functions on $\cat{M}_{Hod}(L)$ and 
$\M_{Hod}(L)$. We also study the automorphism group
of $\cat{M}'_{Hod}(L)$.
 \end{abstract}

\maketitle
\section{Introduction and statements of the results}
\subsection{Notations and conventions}
In what follows, $\A^1$ denote the affine line over
the field of complex numbers $\C$, $X$ denote the 
compact connected Riemann surface of genus $g \geq 3$.
Unless otherwise stated genus $g$ of $X$ is considered to be $\geq 3$.  $J(X)$ denote the Jacobian 
of $X$.

\subsection{Motivation and results}
The Picard group of a moduli space is a very important 
invariant while studying the classification
problems for algebro-geometric objects.  The Picard 
group of moduli space of vector bundles have been 
studied  extensively by several algebraic geometers,
for instance \cite{DN}, \cite{R}, \cite{B} to name a few. Also, the Picard group and algebraic functions 
for the moduli space of logarithmic connections have
been studied in \cite{BR} and \cite{S}.
So, the purpose of this article is to study the Picard
group, algebraic function and automorphism group of the 
moduli spaces of $\lambda$-connections.  
The notion of $\lambda$-connection in a vector bundle
was introduced by P. Deligne \cite{D} realising the 
Higgs bundles as a degeneration of vector bundles 
with connections, where $\lambda$ varies over the field 
of complex numbers $\C$.
In the series of papers \cite{Si1}, \cite{Si4} and \cite{Si5}, Simpson 
studied the moduli space of $\lambda$-connections extensively, where he named the moduli space of 
$\lambda$-connections as Hodge moduli space.

 In section \eqref{pre}, we recall
the notion of $\lambda$-connection in a holomorphic vector bundle $E$ over $X$ of rank $n$. We also recall the 
notion of stable and polystable 
$\lambda$-connections.
 Let $ \cat{M}_{Hod} = 
\cat{M}_{Hod}(X,n)$ denote the moduli space of 
 stable $\lambda$-connections on $X$ of rank 
 $n$. Let $
 \cat{M}'_{Hod} \subset \cat{M}_{Hod}$ be the 
 moduli space whose underlying  vector bundle 
 is stable. We show that there is a natural 
 compactification  of the moduli space $\cat{M}'_{Hod}$
 [see Proposition \eqref{thm:1}].

 Let $\cat{U} = \cat{U}(X,n)$ denote the moduli space of 
 stable vector bundle of rank $n$ and degree $0$, and let 
 \begin{equation}
 \label{eq:0.2}
 p : \cat{M}'_{Hod} \to \cat{U}
  \end{equation}
be the natural projection sending 
$(E, \lambda, D) \mapsto E$.
The morphism $p: \cat{M}'_{Hod} \to \cat{U}$ defined in
\eqref{eq:0.2} induces a homomorphism 
\begin{equation}
\label{eq:0.3}
p^*: \Pic {\cat{U}} \to \Pic{\cat{M}'_{Hod}}
\end{equation}
of Picard groups that sends any line bundle $\xi$ over 
$\cat{U}$ to a line bundle $p^* \xi$ over
 $\cat{M}'_{Hod}$.
Next, let $\iota: \cat{M}'_{Hod} \hookrightarrow \cat{M}_{Hod}$ be the inclusion morphism. Then $\iota$ induces a homomorphism
\begin{equation}
\label{eq:0.4}
\iota^* : \Pic{\cat{M}_{Hod}} \to \Pic{\cat{M}'_{Hod}}
\end{equation}
of Picard groups which restricts any line bundle 
$\xi$ over $\cat{M}_{Hod}$ to $\cat{M}'_{Hod}$.
We show that $p^*$ and $\iota^*$ are isomorphisms of 
groups, more concretely [see Theorem \eqref{thm:2}]
\begin{theorem}
\label{thm:0.2} Let $X$ be a compact Riemann surface of 
genus $g \geq 3$. Then,
the two homomorphism 
\begin{equation*}
\label{eq:0.5}
\Pic{\cat{U}} \xrightarrow{p^*} \Pic{\cat{M}'_{Hod}}
\xleftarrow{\iota^*} \Pic{\cat{M}_{Hod}}
\end{equation*}
defined in \eqref{eq:0.3} and \eqref{eq:0.4} are isomorphisms.
\end{theorem}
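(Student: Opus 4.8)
The plan is to establish the two isomorphisms by unrelated mechanisms: $p^{*}$ is an isomorphism because $p$ is the projection of a vector bundle, whereas $\iota^{*}$ is an isomorphism because $\cat{M}'_{Hod}$ is obtained from the smooth variety $\cat{M}_{Hod}$ by deleting a closed subset of codimension at least two.

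For $p^{*}$, first I would identify the fibres of the map $p$ of \eqref{eq:0.2}. Over a stable bundle $E$ of rank $n$ and degree $0$, the set $W_{E}$ of pairs $(\lambda,D)$ with $D$ a $\lambda$-connection on $E$ is a complex vector space: the Leibniz symbols are additive, so $D_{1}+D_{2}$ is a $(\lambda_{1}+\lambda_{2})$-connection and $cD_{1}$ is a $(c\lambda_{1})$-connection, while $(0,0)$ is the zero $0$-connection. Taking symbols yields the short exact sequence $0 \to H^{0}(X,\End{E}\otimes \Omega^{1}_{X}) \to W_{E} \xrightarrow{\,(\lambda,D)\mapsto \lambda\,} \C \to 0$, where surjectivity holds because a stable bundle of degree $0$ carries a holomorphic connection by the Atiyah--Weil criterion. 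Since $E$ is stable one has $H^{0}(X,\End{E})=\C$, so Riemann--Roch and Serre duality give $\dim[\C]{W_{E}}=n^{2}(g-1)+2$, independent of $E$. These fibres globalise: if $\mathcal{E}$ is a (local) universal bundle on $X\times\cat{U}$ with projection $\pi$ to $\cat{U}$, then $\pi_{*}(\END{\mathcal{E}}\otimes \Omega^{1}_{X})$ is locally free by cohomology and base change, and $\cat{M}'_{Hod}$ is the total space of a locally free sheaf $\mathcal{W}$ on $\cat{U}$ fitting into $0 \to \pi_{*}(\END{\mathcal{E}}\otimes \Omega^{1}_{X}) \to \mathcal{W} \to \struct{\cat{U}} \to 0$, with $p$ the bundle projection. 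Because the Picard group of a smooth variety is invariant under pullback along an (affine, in particular a) vector bundle, $p^{*}\colon \Pic{\cat{U}} \to \Pic{\cat{M}'_{Hod}}$ is an isomorphism.

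For $\iota^{*}$, I would first note that $\iota$ is an open immersion, since stability of the underlying bundle is an open condition in families, and that $\cat{M}_{Hod}$ is smooth, hence locally factorial, so that $\Pic{\cat{M}_{Hod}}\cong\mathrm{Cl}(\cat{M}_{Hod})$. For a smooth variety the restriction map on divisor class groups to an open subset whose complement has codimension at least two is an isomorphism; hence it suffices to prove that the complement $Z:=\cat{M}_{Hod}\setminus\cat{M}'_{Hod}$ --- the locus of stable $\lambda$-connections whose underlying bundle fails to be stable --- has codimension at least two in $\cat{M}_{Hod}$, which has dimension $2n^{2}(g-1)+3$.

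This codimension bound is the heart of the matter and the place where the hypothesis $g\ge 3$ is used. I would stratify $Z$ by the value of $\lambda$ (the cases $\lambda=0$ and $\lambda\ne 0$, the latter rescaled to an honest connection) and by the Harder--Narasimhan type of $E$. The mechanism that makes each stratum small is that a stable $\lambda$-connection cannot preserve a destabilising subbundle: each term $F$ of the Harder--Narasimhan filtration of $E$ has $\mu(F)>0$, so stability of the triple forbids $F$ from being $D$-invariant, whence the second fundamental form $F\to(E/F)\otimes\Omega^{1}_{X}$ is a nonzero $\struct{X}$-linear map; comparing slopes through these maps shows that consecutive Harder--Narasimhan slopes of $E$ differ by at most $2g-2$, leaving only finitely many Harder--Narasimhan types. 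For each such type I would count parameters --- the dimension of the family of unstable bundles of that type, plus the dimension of the affine space of $\lambda$-connections they carry, plus one for $\lambda$ --- and compare with $\dim{\cat{M}_{Hod}}$. The expected main obstacle is to make these counts uniform across the strata and to verify that the codimension is at least two in every case; a direct computation shows the worst stratum has codimension of order $g-1$, which is at least two exactly when $g\ge 3$. Combining the three paragraphs shows that both $p^{*}$ and $\iota^{*}$ are isomorphisms.
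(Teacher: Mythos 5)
Your top-level architecture matches the paper's (handle $p^*$ and $\iota^*$ separately, with $\iota^*$ reduced to a codimension bound), but the two halves compare quite differently with the paper's proof, so let me take them in turn.

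For $p^*$ you take a genuinely different route. The paper proves injectivity by hand: a trivializing section of $p^*\xi$ restricts on each fibre $p^{-1}(E)\cong\C^N$ to a nowhere-vanishing regular function, hence is constant, hence descends to a trivialization of $\xi$; surjectivity is then deduced from the compactification of Proposition \ref{thm:1} by extending a line bundle to $\p(\cat{Q})$, applying the projective-bundle formula $\Pic{\p(\cat{Q})}\cong\tilde{\pi}^*\Pic{\cat{U}}\oplus\Z\,\struct{\p(\cat{Q})}(1)$, and twisting away the hyperplane at infinity. You instead observe that $p$ is itself a vector-bundle projection, via the fibrewise exact sequence $0\to\coh{0}{X}{\ENd{E}\otimes\Omega^1_X}\to V(E)\to\C\to 0$ (surjective on the right by Atiyah--Weil), and then invoke invariance of the Picard group under vector-bundle pullback over a smooth base. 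That invariance is a standard fact (e.g.\ Fulton, \emph{Intersection Theory}, Theorem 3.3(a), together with $\Pic{}=\mathrm{Cl}$ on smooth varieties), so your argument is correct and shorter, handling injectivity and surjectivity at once. What it presupposes --- exactly as the paper's Proposition \ref{thm:1} does --- is that the fibrewise vector-space structure globalizes to an algebraic vector bundle over $\cat{U}$; since $\gcd(n,0)\neq 1$, a Poincar\'e bundle need not exist globally on $X\times\cat{U}$, and one must use that $\END{\mathcal{E}}$ and the sheaf of relative first-order operators with scalar symbol descend. Your ``(local) universal bundle'' parenthesis indicates the right fix, and the paper is no more detailed on this point. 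What the paper's longer route buys is Proposition \ref{thm:1} itself, which it reuses for the fixed-determinant moduli space.

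For $\iota^*$ your reduction is identical to the paper's: $\iota$ is an open immersion (openness of stability in families, \cite{M}), $\cat{M}_{Hod}$ is smooth, so it suffices that $\cat{M}_{Hod}\setminus\cat{M}'_{Hod}$ have codimension at least $2$ (and you are right to insert smoothness/local factoriality, which the paper's Lemma \ref{lem:1} needs as stated). The one genuine gap is that this codimension bound --- which you correctly call the heart of the matter --- is asserted rather than proved: ``a direct computation shows the worst stratum has codimension of order $g-1$'' is precisely the statement that requires proof. The paper closes this point by citation, deducing the bound from the first statement of Proposition 1.2 of \cite{B}; you propose to reprove it by Harder--Narasimhan stratification and parameter counting, which is a viable and standard strategy (the nonvanishing second fundamental form argument bounding the HN types is correct), but the count is never executed. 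For what it is worth, your asserted figure checks out: for $n=2$, $\lambda=0$, the dominant stratum consists of stable Higgs fields on strictly semistable bundles; generic such bundles are nonsplit extensions of one degree-zero line bundle by another and are simple, so the bundle contributes $3g-2$ parameters and the Higgs field $4g-3$, giving dimension $7g-5$ inside the $(8g-6)$-dimensional Dolbeault locus, i.e.\ codimension exactly $g-1$, whence the hypothesis $g\geq 3$. To complete your proof you must either run this count uniformly over all HN types and over both $\lambda=0$ and $\lambda\neq 0$, or simply quote \cite{B} as the paper does. Finally, a harmless discrepancy: you take $\dim{\cat{M}_{Hod}}=2n^2(g-1)+3$ while the paper states $2n^2(g-1)+2$; your figure is the one consistent with the paper's own fibre dimension \eqref{eq:7} over the $(n^2(g-1)+1)$-dimensional base $\cat{U}$, and in any case only the codimension enters the argument.
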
 

Let $\M_{Hod} = \M_{Hod}(X,n)$ (see \cite{HZ}) denote the coarse moduli 
space of isomorphism classes of polystable 
$\lambda$-connections on $X$ of rank $n$. 
Then we show that $\Pic{\M_{Hod}} \cong \Pic{\cat{M}_{Hod}}$ [see Proposition \eqref{prop:0}].

Fix a line bundle $L$ over  
$X$ of degree zero. For each $\lambda \in \C$, fix a $\lambda$-connection $D^{\lambda}_{L}$ on $L$ such that
for any $\lambda_1, \lambda_2 \in \C$, we have 
\begin{equation*}
\label{eq:0.7}
D^{\lambda_1}_L + D^{\lambda_2}_L = D^{\lambda_1 + \lambda_2}_L.
\end{equation*}

Let  $ \cat{M}_{Hod}(L) = \cat{M}_{Hod}(X,n,L)$ denote 
the moduli space parametrising all stable 
$\lambda$-connection $(E, \lambda, D)$ such that$
\bigwedge^{n}E \cong L $ and for every $\lambda \in \C$, 
the $\lambda$-
connection on $\bigwedge^{n}E$ induced by
$D$ coincides with the given $\lambda$-connection 
$D^{\lambda}_{L}$ on $L$.  Let  $\cat{M}'_{Hod}(L) \subset \cat{M}_{Hod}(L)$ be the moduli spaces whose
underlying vector bundle is stable.
We show that $\cat{M}'_{Hod}(L)$ admits a  natural compactification
[see Proposition \eqref{thm:3}] and prove that $Pic(\cat{M}'_{Hod}(L)) \cong \Z$ [see Corollary \eqref{cor:1}. Now, we have a 
 natural surjective algebraic morphism 
 \begin{equation}
 \label{eq:0.8}
 \Phi_L: \cat{M}_{Hod}(L) \to \A^1
 \end{equation}
 defined by $(E,\lambda, D) \mapsto \lambda$.
 Then, we have [see Theorem \eqref{thm:6}]
 \begin{theorem}
\label{thm:0.6}
Any algebraic function on $\cat{M}_{Hod}(L)$ will factor
through the map 
\begin{equation*}
\Phi_L: \cat{M}_{Hod}(L) \to \A^1
\end{equation*}
defined in \eqref{eq:0.8}.
\end{theorem}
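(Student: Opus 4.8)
The plan is to exploit the natural scaling action of $\units{\C}$ together with the fact that the generic fibre of $\Phi_L$ is a de Rham moduli space, which carries no non-constant algebraic functions. First I would record the algebraic action of $\units{\C}$ on $\cat{M}_{Hod}(L)$ given by $t \cdot (E, \lambda, D) = (E, t\lambda, tD)$, which satisfies $\Phi_L(t \cdot (E,\lambda,D)) = t\lambda$. Since the additivity condition $D^{\lambda_1}_L + D^{\lambda_2}_L = D^{\lambda_1 + \lambda_2}_L$ forces $D^{\lambda}_L = \lambda D^{1}_L$ for a fixed flat connection $D^{1}_L$ on $L$, dividing by $\lambda$ identifies, for each $\lambda \neq 0$, the fibre $\invimage{\Phi_L}{\lambda}$ with the de Rham moduli space $\cat{M}_{dR}(L)$ of flat connections with fixed determinant (on a curve every connection is integrable). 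More precisely, the assignment $(E, \lambda, D) \mapsto ((E, D/\lambda), \lambda)$ gives an isomorphism
\begin{equation*}
\invimage{\Phi_L}{\units{\C}} \;\cong\; \cat{M}_{dR}(L) \times \units{\C}
\end{equation*}
under which $\Phi_L$ becomes the projection to $\units{\C}$; one checks directly that this respects the fixed-determinant condition via the linearity $D^{\lambda}_L = \lambda D^{1}_L$.

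Next I would invoke the key input that $\cat{M}_{dR}(L)$ admits only constant global algebraic functions, i.e. $\coh{0}{\cat{M}_{dR}(L)}{\struct{}} = \C$. This is the de Rham analogue of the statements proved for logarithmic connections in \cite{BR} and \cite{S}, and can be obtained along the same lines by viewing $\cat{M}'_{dR}(L)$ as a non-trivial affine bundle (a twisted cotangent bundle) over the projective variety $\cat{U}(L)$, whose non-zero twisting class kills all fibrewise-polynomial functions of positive degree, so that only constants survive. Granting this, the identity $\struct{}(A \times B) = \struct{}(A) \otimes_{\C} \struct{}(B)$ for global functions on a product gives
\begin{equation*}
\coh{0}{\invimage{\Phi_L}{\units{\C}}}{\struct{}} \;\cong\; \coh{0}{\cat{M}_{dR}(L)}{\struct{}} \otimes_{\C} \C[\lambda, \lambda^{-1}] \;=\; \C[\lambda, \lambda^{-1}].
\end{equation*}
Hence the restriction of any algebraic function $f$ on $\cat{M}_{Hod}(L)$ to the dense open set $\invimage{\Phi_L}{\units{\C}}$ is a Laurent polynomial $g(\lambda)$ in $\lambda$ alone.

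It then remains to promote this to all of $\cat{M}_{Hod}(L)$. Since $\cat{M}_{Hod}(L)$ is irreducible and $\Phi_L$ is surjective, the Higgs fibre $\invimage{\Phi_L}{0}$ is a non-empty proper closed subset and $\invimage{\Phi_L}{\units{\C}}$ is dense. As $f$ is regular along $\invimage{\Phi_L}{0}$, restricting $f$ to a curve in $\cat{M}_{Hod}(L)$ that meets this fibre and along which $\lambda \to 0$ shows that $g$ remains bounded as $\lambda \to 0$; therefore $g$ has no pole at the origin and is in fact an honest polynomial $g \in \C[\lambda]$. Finally $f - g \circ \Phi_L$ is an algebraic function vanishing on the dense open set $\invimage{\Phi_L}{\units{\C}}$, so it vanishes identically, giving $f = g \circ \Phi_L$ and proving that every algebraic function factors through $\Phi_L$.

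The main obstacle is the input of the second step: establishing that $\cat{M}_{dR}(L)$ has only constant global functions, which is what rules out the Hitchin-type functions present on the $\lambda = 0$ (Higgs) fibre from extending to the whole space. Everything else is formal once the product decomposition over $\units{\C}$ and the irreducibility of $\cat{M}_{Hod}(L)$ are in place; the only remaining points requiring care are the compatibility of the scaling with the fixed-determinant condition, handled by $D^{\lambda}_L = \lambda D^{1}_L$, and the regularity argument across $\lambda = 0$, where irreducibility guarantees that the Higgs fibre genuinely lies in the closure of the generic locus, so that no negative powers of $\lambda$ can occur.
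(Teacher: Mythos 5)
Your proposal takes essentially the same route as the paper: the crucial input in both is that the fibres $\Phi^{\lambda}_L$, $\lambda \neq 0$, are fixed-determinant de Rham moduli spaces admitting only constant algebraic functions (the paper gets this from Lemma \ref{thm:5} and Corollary \ref{prop:4}, via the identification $\cat{C}(\Theta) \cong \cat{M}'_{DR}(L)$ and the Biswas--Raghavendra theorem), after which the function descends over $\C^*$ and extends across $\lambda = 0$ by density. Your product decomposition $\Phi_L^{-1}(\C^*) \cong \cat{M}_{DR}(X,n,L) \times \C^*$ and the explicit no-pole argument at $\lambda = 0$ simply repackage, and fill in details of, the same argument that the paper states more tersely.
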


Finally, in the last section\eqref{Auto-Hodge}, we
prove that the group $\Aut{\cat{M}'_{Hod}(L)}_0$  of  algebraic 
automorphisms of the moduli space 
$\cat{M}'_{Hod}(L)$ fits into the following short
exact sequence 
\begin{equation*}
\label{eq:0.9}
0 \to \V \xrightarrow{\imath} 
\Aut{\cat{M}'_{Hod}(L)}_0 \xrightarrow{\rho}  \W \times 
\C^*
\to 0
\end{equation*}
of groups [see Theorem \eqref{thm:7}].

\section{Preliminaries}
\label{pre}

\begin{definition}\cite{Si4}
\label{def:1}
A $\lambda$-\emph{connection}  on $X$ is a triple of the 
form $(E, \lambda, D)$, where $\lambda \in \C$, $E$ is 
a holomorphic vector bundle on $X$ of rank $n$ such that
$\det(E)= \Lambda^nE \in J(X)$ and 
\begin{equation*}
\label{eq:1}
D: E \to \Omega^1_X \otimes E
\end{equation*}
is a $\C$-linear map which satisfies (Leibniz property)
\begin{equation}
\label{eq:2}
D(fs)= f D(s) + \lambda  df \otimes s
\end{equation}  
for every local holomorphic section $s$ of $E$  and 
every local section $f$ of $\struct{X}$.
\end{definition} 

If $(E, \lambda, D)$ is a $\lambda$-connection on $X$ 
with $ \lambda \neq 0$, then $D/ \lambda$ is a 
holomorphic connection on $E$. Therefore by Atiyah 
\cite{A} and Weil \cite{W}, degree$(E) = 0$.
On the other hand, if $\lambda = 0$, then $D$ is an 
$\struct{X}$-linear map, and hence $D \in \coh{0}{X}
{\Omega^1_X \otimes \ENd{E}}$. Thus, the triple $(E,0,D)
$ is a Higgs bundle on $X$ with degree$(E) = 0$   
(by Definition \eqref{def:1}). 

Given two $\lambda$-connections $(E,\lambda, D)$ and 
$(E, \lambda, D')$ on $X$, where $E$ and $\lambda$ are fixed, we have $D-D' \in \coh{0}{X}{\Omega^1_X 
\otimes \ENd{E}}$. Conversely, given $\omega \in \coh{0}
{X}{\Omega^1_X \otimes \ENd{E}}$, the triple $(E, 
\lambda, D + \omega)$ is again a $\lambda$-connection
on $X$.

Now, for fixed $E$ and fixed $\lambda \in \C$, let $V_
\lambda(E)$ denote the set consists of 
$\lambda$-connections $(E, \lambda, D)$. Then $V_
\lambda(E)$ is 
an affine space modelled on $\coh{0}{X}{\Omega^1_X 
\otimes \ENd{E}}$ if $\lambda \neq 0$, and $V_0(E) = 
\coh{0}{X}{\Omega^1_X \otimes \ENd{E}}$.

Next, fix $E$. Let $(E, \lambda, D)$ and 
$(E, \lambda', D')$ be two $\lambda$-connections on $X$. 
Then $(E, \lambda + \lambda', D+D')$ is a $\lambda$-
connection on $X$. Moreover, for any $\beta \in \C$, the 
triple $(E, \beta \lambda, \beta D)$ is a $\lambda$-
connection on $X$. The set of all $\lambda$-connections 
in $E$, denoted
by $V(E)$, is a vector space over $\C$. We have 
\begin{equation}
\label{eq:3}
V(E) = \coprod_{\lambda \in \C} V_\lambda(E).
\end{equation}
If $E$ is a stable vector bundle over $X$, then 
using Riemann-Roch theorem and
Serre duality, 
 the dimension 
of $V_0(E) = \coh{0}{X}{\Omega^1_X \otimes \ENd{E}}$
is $n^2(g-1)+1$.

\begin{definition}
\label{def:2}
A $\lambda$-connection $(E,\lambda,D)$ over $X$ is said 
to be semistable (respectively, stable), if for every 
proper subbundle $F \neq 0$
with $rk(F) > 0$,
invariant under  $D$, that is, $D(F) \subset F \otimes 
\Omega^1_X$, we have $degree(F) \leq 0$ (respectively, 
$degree(F)< 0$).

A $\lambda$-connection is polystable if it decomposes as 
a direct sum of stable $\lambda$-connections with the 
same slope.
\end{definition} 
 Let $ \cat{M}_{Hod} = \cat{M}_{Hod}(X,n)$ denote the 
 moduli space of stable $\lambda$-connections of rank $n
 $ on $X$. Then $
 \cat{M}_{Hod}$ is an irreducible  smooth 
 quasi-projective variety of dimension
 $2 n^2(g-1)+2$ (see \cite{Si1}, \cite{Si3}).
  We have
 natural surjective algebraic morphism 
 \begin{equation}
 \label{eq:4}
 \Phi: \cat{M}_{Hod} \to \A^1
 \end{equation}
 defined by $(E,\lambda, D) \mapsto \lambda$. 
 Let $\Phi^\lambda$ denote the inverse image 
 $\Phi^{-1}(\lambda)$. 
  Then $\Phi^1 = \cat{M}_{DR}(X,n)$ and 
 $\Phi^0 = \cat{M}_{Dol}(X,n)$ are de Rham moduli space 
 and Dolbeault moduli space, respectively.
 Let $\cat{M}'_{Hod} \subset \cat{M}_{Hod}$ be the 
 moduli space whose underlying  vector bundle 
 is stable. Then $\cat{M}'_{Hod}$ is a Zariski open
 subvariety of $\cat{M}_{Hod}$ follows from
  [\cite{M}, p.635, Theorem 2.8(B)].

\section{The Picard group of the Hodge moduli space}
\label{Pic}
In this section, we compute the Picard group 
of the Hodge moduli spaces.
Let $\cat{U} = \cat{U}(X,n)$ denote the moduli space of 
 stable vector bundle of rank $n$ and degree $0$, and let 
 \begin{equation}
 \label{eq:5}
 p : \cat{M}'_{Hod} \to \cat{U}
  \end{equation}
be the natural projection sending 
$(E, \lambda, D) \to E$. Then $p$ is a surjective 
algebraic morphism.
For $E \in \cat{U}$, $p^{-1}(E) = V(E)$, where $V(E)$
is the vector space of all $\lambda$-connections in $E$
 that can be expressed as disjoint union of 
 $V_\lambda(E)$ for $\lambda \in \C$ described in 
 $\eqref{eq:3}$. Now, since 
 $E$ is a stable
vector bundle, we have 
\begin{equation}
\label{eq:7}
\dim[\C]{V(E)} = n^2(g-1)+2.
\end{equation}

There is a natural algebraic action  of $\C^*$ on 
$\cat{M}'_{Hod}$. In \cite{Si6}, using this natural 
$\C^*$ action
on $\cat{M}'_{Hod}$, Simpson has proved a relative
copactification of $\cat{M}'_{Hod}$ over $\A^1$.
Nevertheless, we give a different proof for the compactification of $\cat{M}'_{Hod}$, which is constructive in nature. The statement and  the steps involved in the 
following Proposition \eqref{thm:1}
is exactly similar  to the statement and proof of the Theorem 
4.3
in \cite{S}. The main difference is that the fibre of
$p$ defined in \eqref{eq:5} is a finite dimensional 
vector space, while in \cite{S}, the fibre of $p$ 
[see \cite{S} equation(4.1)]
is an affine space modelled over $\coh{0}{X}{\Omega^1_X 
\otimes \ENd{E}}$. Nonetheless, we shall sketch the 
construction of algebraic vector bundle $\cat{Q}$ over
$\cat{U}$, because the morphisms 
\eqref{eq:7.1} and 
\eqref{eq:7.2} in proof of  the following Proposition
\ref{thm:1}  are very crucial in proving Theorem 
\eqref{thm:2}.   

\begin{proposition}
\label{thm:1}
There exists an algebraic vector bundle $\pi: \cat{Q} \to \cat{U}$ such that 
$\cat{M}'_{Hod}$ is embedded in $\p (\cat{Q})$ with $\p (\cat{Q}) \setminus \cat{M}'_{Hod}$ as the
hyperplane at infinity.
\end{proposition}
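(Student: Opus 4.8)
The plan is to realize $\cat{M}'_{Hod}$ as the total space of an algebraic vector bundle over $\cat{U}$ and then compactify fibrewise. First I would reinterpret the fibre $V(E)=p^{-1}(E)$ cohomologically. A $\lambda$-connection $D\colon E\to\Omega^1_X\otimes E$ is a differential operator of order at most one, so it corresponds to an $\struct{X}$-linear homomorphism $\bar{D}\colon J^1(E)\to\Omega^1_X\otimes E$ out of the first jet bundle, which sits in the exact sequence $0\to\Omega^1_X\otimes E\to J^1(E)\to E\to 0$. The Leibniz rule forces the symbol $\bar{D}|_{\Omega^1_X\otimes E}\in\HOM{\Omega^1_X\otimes E}{\Omega^1_X\otimes E}\cong\ENd{E}$ to equal $\lambda\cdot\id{E}$; since $E$ is stable, $\coh{0}{X}{\ENd{E}}=\C\cdot\id{E}$, so the symbol of \emph{any} global $\bar{D}$ is automatically a scalar. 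Hence
\[
V(E)\;\cong\;\coh{0}{X}{\HOM{J^1(E)}{\Omega^1_X\otimes E}},
\]
and the scalar $\lambda$ is recovered as the image of $\bar{D}$ under the symbol map. Applying $\HOM{-}{\Omega^1_X\otimes E}$ to the jet sequence yields $0\to\Omega^1_X\otimes\ENd{E}\to\HOM{J^1(E)}{\Omega^1_X\otimes E}\to\ENd{E}\to0$, and taking $\coh{0}{X}{-}$ together with the Atiyah--Weil theorem (a stable bundle of degree $0$ carries a holomorphic connection, so its Atiyah class vanishes and the connecting map is zero) shows that this space has the constant dimension $n^2(g-1)+2$, independently of $E\in\cat{U}$.

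Next I would carry out this construction in families over $\cat{U}$. Although a universal (Poincar\'e) bundle on $\cat{U}\times X$ need not exist globally when $n>1$ and the degree is $0$, the only ambiguity in a local universal bundle $\sheaf{E}$ is tensoring by the pullback of a line bundle $L$ from $\cat{U}$; since the relative first jet bundle along $X$ satisfies $J^1(\sheaf{E}\otimes q^{*}L)\cong J^1(\sheaf{E})\otimes q^{*}L$, the sheaf $\HOM{J^1(\sheaf{E})}{\Omega^1_X\otimes\sheaf{E}}$ is canonically independent of this choice and therefore descends to $\cat{U}\times X$. Pushing it forward along the projection $\cat{U}\times X\to\cat{U}$ and invoking the constancy of $h^0$ together with Grauert's theorem produces a locally free sheaf; let $\cat{Q}_0$ denote the associated algebraic vector bundle, of rank $n^2(g-1)+2$. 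Its total space is canonically identified with $\cat{M}'_{Hod}$: a point of the fibre over $E$ is an element of $V(E)$, and because $E$ is stable every such $\lambda$-connection is automatically stable (any proper subbundle already has negative degree), so no stability condition is lost. The symbol map assembles into a bundle surjection $\cat{Q}_0\to\struct{\cat{U}}$ recording $\lambda$, which is the first of the two structural morphisms used later.

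Finally I would compactify each fibre by the standard embedding of a vector space into a projective space. Setting $\cat{Q}=\cat{Q}_0\oplus\struct{\cat{U}}$, a bundle of rank $n^2(g-1)+3$, the fibrewise map $v\mapsto[v:1]$ from $V(E)$ into $\p(V(E)\oplus\C)$ globalizes to an open immersion $\cat{M}'_{Hod}\hookrightarrow\p(\cat{Q})$ whose image is the complement of the sub-projective-bundle $\p(\cat{Q}_0)\subset\p(\cat{Q})$; this $\p(\cat{Q}_0)$ is the hyperplane at infinity, and the inclusion $\p(\cat{Q}_0)\hookrightarrow\p(\cat{Q})$ is the second structural morphism. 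The hard part will be the two genuinely non-formal points: justifying the descent of the jet-and-$\HOM{-}{-}$ construction in the absence of a global Poincar\'e bundle, so that $\cat{Q}_0$ is a bona fide algebraic vector bundle on $\cat{U}$; and verifying that $h^0\bigl(\HOM{J^1(E)}{\Omega^1_X\otimes E}\bigr)$ is genuinely locally constant over all of $\cat{U}$, which rests on the vanishing of the Atiyah class for stable degree-zero bundles. Everything else follows the pattern of Theorem~4.3 of \cite{S}, the only structural difference being that here the fibre is a full vector space rather than an affine space, which is precisely what makes the embedding into $\p(\cat{Q})$ the natural compactification.
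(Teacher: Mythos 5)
Your proposal is correct, and it produces the same compactification as the paper, but by a genuinely different --- in fact dual --- construction. The paper uses none of your machinery (jet bundles, local universal bundles, Grauert): it defines $\cat{Q}$ directly as the sheaf whose sections over $U \subset \cat{U}$ are the regular functions on $p^{-1}(U)$ that are linear on each fibre $V(E)$, i.e.\ the fibrewise dual of the fibration $p$, and then embeds $\cat{M}'_{Hod}$ into the bundle $\p(\cat{Q})$ of \emph{hyperplanes} in the fibres of $\cat{Q}$ by sending $(E,\lambda,D)$ to the kernel of the evaluation functional $\eta_{(E,\lambda,D)}\colon V(E)^{\vee}\to\C$; the complement is the hyperplane at infinity. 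These are two descriptions of one object: hyperplanes in the space of affine functions on $V(E)$ form exactly $\p\bigl(V(E)\oplus\C\bigr)$, which is your fibrewise model, with $v\mapsto[v:1]$ corresponding to $v\mapsto \Ker{\eta_v}$. (As literally written the paper takes only \emph{linear} functionals, so the evaluation map at the origin of $V(E)$ --- the point $(E,0,0)$, the zero Higgs field --- is zero and its kernel is not a hyperplane; one really needs the fibrewise affine functions $V(E)^{\vee}\oplus\C$, and your $\cat{Q}_0\oplus\struct{\cat{U}}$ is precisely the primal form of that corrected bundle.) What your route buys is exactly what the paper leaves implicit: by identifying $V(E)$ with $\coh{0}{X}{\HOM{J^1(E)}{\Omega^1_X\otimes E}}$ (stability forcing a scalar symbol), extracting the constant dimension $n^2(g-1)+2$ from the vanishing of the Atiyah class, and descending the jet-Hom sheaf past the absence of a Poincar\'e bundle before applying Grauert, you actually \emph{prove} that $p$ is an algebraic vector bundle --- the fact on which the paper's assertions that $\cat{Q}$ is an algebraic vector bundle and that $\iota$ in \eqref{eq:7.1} is an open embedding silently rest (the paper defers these details to Theorem 4.3 of \cite{S}). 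What the paper's route buys is brevity and independence from universal-bundle technology. The two points you flag as non-formal (the descent, and the local constancy of $h^0$) are indeed the essential ones, and your treatments of them (twist-invariance of $\HOM{J^1(\sheaf{E})}{\Omega^1_X\otimes\sheaf{E}}$ under $\sheaf{E}\mapsto\sheaf{E}\otimes q^*L$, and Atiyah--Weil for stable degree-zero bundles) are correct.
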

\begin{proof}

By dualizing the fibres of $p$
defined in \eqref{eq:5},
we construct a new vector bundle  over $\cat{U}$ 
satisfying certain 
conditions. 
 For
any $E \in \cat{U}$, the fiber $p^{-1}(E) = V(E)$ is a 
finite dimensional vector space over $\C$.  
 Let $\pi: \cat{Q} \to \cat{U}$ be an algebraic vector 
 bundle defined as follows;
for every Zariski open subset $U$ of $\cat{U}$, 
\begin{equation}
\label{eq:7.05}
\cat{Q}(U)= \{f: p^{-1}(U) \to {\A^1}~ \mbox{is a regular function} : f|_{V(E)} \in V(E)^{\vee}\}
\end{equation}

 Let $(E, \lambda, D) \in \cat{M}'_{Hod}$.  Then
 define a map $\eta_{(E, \lambda, D)}: V(E)^{\vee} \to \C$, by 
 $\eta_{(E, \lambda, D)}(\varphi) = 
 \varphi[(E,\lambda, D)]$, which is nothing but the 
 evaluation map. Now,
 the kernel $\Ker{\eta_{(E,\lambda, D)}}$ defines a hyperplane in $V(E)^{\vee}$ is
 denoted by $H_{(E, \lambda, D)}$. 
 
 Let $\p (\cat{Q})$ be the projective bundle defined by hyperplanes in the fiber of $\pi$. 
 
   Define a map 
   \begin{equation}
   \label{eq:7.1}
   \iota: \cat{M}'_{Hod} \to \p(\cat{Q})
   \end{equation}
   by sending  $(E, \lambda, D)$ to the hyperplane  $H_{(E, \lambda, D)}$,
  which is clearly an open embedding.

  Set $Y = \p (\cat{Q}) \setminus \cat{M}'_{Hod}$.
  Let
  \begin{equation}
  \label{eq:7.2}
  \tilde{\pi}: \p(\cat{Q}) \to \cat{U}
  \end{equation}
   be the natural projection induced from $\pi$.  
  Then $\tilde{\pi}^{-1}(E) \cap Y$ is a linear hyperplane in the projective space $\tilde{\pi}^{-1}(E)$ for every $E \in \cat{U}$.
\end{proof}

Let $Z$ be a quasi-projective variety over $\C$.  We 
recall an elementary fact about Picard groups.

\begin{lemma}
\label{lem:1}
Let $\iota : U \hookrightarrow Z$ be a Zariski open 
subset of $Z$, whose complement has codimension at least 
$2$ in $Z$.
Then the induced homomorphism  $\iota^*: \Pic{Z} \to  
\Pic{U}$  of Picard groups is an isomorphism.
\end{lemma}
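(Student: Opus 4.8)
The plan is to deduce the statement for Picard groups from the corresponding statement for Weil divisor class groups. In all the applications the ambient variety $Z$ is smooth, hence locally factorial and normal; I shall use this regularity throughout. For any smooth variety $W$ the map sending a prime divisor to its associated invertible sheaf induces an isomorphism $\mathrm{Cl}(W) \xrightarrow{\sim} \Pic{W}$ between the Weil divisor class group and the Picard group, and this identification is compatible with restriction to a Zariski open subset. Consequently it suffices to prove that the restriction homomorphism $\mathrm{Cl}(Z) \to \mathrm{Cl}(U)$ is an isomorphism, and then transport the conclusion back to Picard groups via the commuting square relating $\iota^*$ to the restriction of divisor classes.

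For the class groups I would exploit the codimension hypothesis directly. Since $Z$ is integral and normal, a Weil divisor is a finite $\Z$-linear combination of prime divisors, i.e. of codimension-one irreducible closed subsets, and $U$ shares the function field of $Z$. Because $Z \setminus U$ has codimension at least $2$, it contains no prime divisor of $Z$; hence $V \mapsto \closure{V}$ (closure taken in $Z$) is a bijection between the prime divisors of $U$ and those of $Z$, and restriction of cycles is its inverse. This gives surjectivity of $\mathrm{Cl}(Z) \to \mathrm{Cl}(U)$. For injectivity, suppose a divisor $D$ on $Z$ becomes principal on $U$, say $D|_U = \mathrm{div}(f)$ for a nonzero rational function $f$ on $U$. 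Regarding $f$ as a rational function on $Z$ via the common function field, the divisors $\mathrm{div}_Z(f)$ and $D$ agree after restriction to $U$; since every codimension-one point of $Z$ lies in $U$, no further components can be supported on $Z \setminus U$, and therefore $D = \mathrm{div}_Z(f)$ is principal on $Z$. Thus $\mathrm{Cl}(Z) \to \mathrm{Cl}(U)$ is injective as well. This is the content of the standard excision sequence for divisor class groups in codimension $\geq 2$.

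Combining the two isomorphisms yields that $\iota^*: \Pic{Z} \to \Pic{U}$ is an isomorphism, as claimed. The main point requiring care is the passage between Picard groups and class groups: this is exactly where smoothness (equivalently, local factoriality) of $Z$ is used, guaranteeing that every reflexive rank-one sheaf extending a line bundle across a codimension-$\geq 2$ locus is again invertible. Granting the regularity of $Z$ — which holds for all the moduli spaces to which this lemma is applied — the remaining divisor-theoretic arguments are routine, so I expect no serious obstacle beyond correctly invoking the $\mathrm{Pic} \cong \mathrm{Cl}$ identification.
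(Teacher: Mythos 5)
Your proof is correct and takes essentially the same route as the paper, whose entire proof is a citation of Hartshorne, Chapter~II, Proposition~6.5(b) --- precisely the class-group excision statement that you prove by hand, combined with the (there implicit) identification $\mathrm{Cl}\cong\mathrm{Pic}$ for smooth varieties. The only point worth noting is that you explicitly flag the local-factoriality (smoothness) hypothesis on $Z$, which the lemma as stated omits but genuinely needs --- without it the statement fails (e.g.\ for the affine quadric cone with its vertex removed) --- and which is also tacitly invoked when the paper applies the lemma.
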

\begin{proof}
See \cite{H}, Chapter II, Proposition 6.5 (b), p.n. 133.
\end{proof}

The morphism $p: \cat{M}'_{Hod} \to \cat{U}$ defined in
\eqref{eq:5} induces a homomorphism 
\begin{equation}
\label{eq:8}
p^*: \Pic {\cat{U}} \to \Pic{\cat{M}'_{Hod}}
\end{equation}
of Picard groups that sends any line bundle $\xi$ over 
$\cat{U}$ to a line bundle $p^* \xi$ over
 $\cat{M}'_{Hod}$.
 Let 
 \begin{equation}
 \label{eq:9}
 \iota: \cat{M}'_{Hod} \hookrightarrow \cat{M}_{Hod} 
\end{equation} 
be the inclusion map.
Then $\iota$ induces a homomorphism
\begin{equation}
\label{eq:10}
\iota^* : \Pic{\cat{M}_{Hod}} \to \Pic{\cat{M}'_{Hod}}
\end{equation}
of Picard groups which restricts any line bundle 
$\xi$ over $\cat{M}_{Hod}$ to $\cat{M}'_{Hod}$.

Now,
we state the following theorem whose first part
(that is, $p^*:\Pic{\cat{U}} \to \Pic{\cat{M}'_{Hod}}$  is an isomorphism) is similar to Theorem 4.4 in \cite{S}, but in different context,
that is for moduli space of logarithmic connections.

\begin{theorem}
\label{thm:2} Let $X$ be a compact Riemann surface of 
genus $g \geq 3$. Then,
the two homomorphism 
\begin{equation}
\label{eq:11}
\Pic{\cat{U}} \xrightarrow{p^*} \Pic{\cat{M}'_{Hod}}
\xleftarrow{\iota^*} \Pic{\cat{M}_{Hod}}
\end{equation}
defined in \eqref{eq:8} and \eqref{eq:10} are isomorphisms.
\end{theorem}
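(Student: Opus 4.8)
The two assertions in \eqref{eq:11} are independent, so I would prove them separately. The map $p^*$ will be handled through the compactification of Proposition \ref{thm:1}, and the map $\iota^*$ through Lemma \ref{lem:1} together with a codimension estimate.

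For $p^*$: let $\tilde{\pi}:\p(\cat{Q})\to\cat{U}$ be the projective bundle \eqref{eq:7.2} and $Y=\p(\cat{Q})\setminus\cat{M}'_{Hod}$ the hyperplane-at-infinity divisor, so that $\cat{M}'_{Hod}$ is the open complement $\p(\cat{Q})\setminus Y$ via the embedding \eqref{eq:7.1}. First I would apply the projective bundle formula to obtain $\Pic{\p(\cat{Q})}\cong\tilde{\pi}^*\Pic{\cat{U}}\oplus\Z\,\xi$, where $\xi$ is the relative hyperplane class. Since Proposition \ref{thm:1} states that $Y$ meets every fibre of $\tilde{\pi}$ in a single linear hyperplane, its class satisfies $[Y]=\xi+\tilde{\pi}^*\alpha$ for some $\alpha\in\Pic{\cat{U}}$. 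Feeding this into the excision sequence
\begin{equation*}
\Z\xrightarrow{\;1\mapsto[Y]\;}\Pic{\p(\cat{Q})}\longrightarrow\Pic{\cat{M}'_{Hod}}\longrightarrow0
\end{equation*}
for the open immersion $\cat{M}'_{Hod}=\p(\cat{Q})\setminus Y$, I see that restriction annihilates the summand $\Z\,\xi$ and identifies $\Pic{\cat{M}'_{Hod}}$ with $\tilde{\pi}^*\Pic{\cat{U}}$. Because $p$ factors as $\tilde{\pi}$ composed with the embedding \eqref{eq:7.1}, this identification is precisely $p^*$, so $p^*$ in \eqref{eq:8} is an isomorphism. (Equivalently, one notes that $p$ presents $\cat{M}'_{Hod}$ as the total space of the vector bundle over $\cat{U}$ with fibre $V(E)$ and invokes the homotopy invariance of the Picard group under vector bundle projections.)

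For $\iota^*$: by Lemma \ref{lem:1} it is enough to show that the complement $\cat{M}_{Hod}\setminus\cat{M}'_{Hod}$, namely the locus of stable $\lambda$-connections $(E,\lambda,D)$ whose underlying bundle $E$ is not stable, has codimension at least $2$ in $\cat{M}_{Hod}$. I would bound this locus using the morphism $\Phi$ of \eqref{eq:4}: its intersection with the fibre $\Phi^{-1}(\lambda)$ is the non-stable-bundle locus inside $\cat{M}_{DR}$ (for $\lambda\neq0$) or inside $\cat{M}_{Dol}$ (for $\lambda=0$), and since $\Phi$ has one-dimensional base, the codimension in $\cat{M}_{Hod}$ is controlled by the codimensions in these fibres. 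To estimate the latter, fix such an $(E,\lambda,D)$ and a destabilising subbundle $F\subset E$ with $\deg F\geq0$ and $0<\mathrm{rank}(F)<n$; stability of the $\lambda$-connection prevents $F$ from being $D$-invariant, so the $\struct{X}$-linear second fundamental form $\psi:F\to\Omega^1_X\otimes(E/F)$, obtained by composing $D|_F$ with the quotient projection, is a nonzero element of $\coh{0}{X}{\Omega^1_X\otimes F^\vee\otimes(E/F)}$. Stratifying by the finitely many Harder--Narasimhan types and counting the parameters in $(F,E/F,\text{the extension class},\lambda,D)$ subject to $\psi\neq0$ then yields the desired bound.

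The main obstacle is exactly this dimension count: one must verify that requiring $E$ to be non-stable cuts the dimension down by at least two even after allowing $D$ to range over all compatible $\lambda$-connections, the binding case being $\lambda\neq0$, where the fibre is $\cat{M}_{DR}$. It is here that the genus hypothesis $g\geq3$ is used, entering through the dimensions of the Harder--Narasimhan strata and of the spaces $\coh{0}{X}{\Omega^1_X\otimes F^\vee\otimes(E/F)}$ of admissible second fundamental forms, which are what force $\mathrm{codim}\geq2$ to hold. Granting the estimate, Lemma \ref{lem:1} shows that $\iota^*$ in \eqref{eq:10} is an isomorphism, and together with the first part this proves that both arrows in \eqref{eq:11} are isomorphisms.
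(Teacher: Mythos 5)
Your handling of $p^*$ is correct and is essentially the paper's own route: the paper likewise extends a line bundle from $\cat{M}'_{Hod}$ to $\p(\cat{Q})$, invokes the projective bundle formula $\Pic{\p (\cat{Q})} \cong \tilde{\pi}^*\Pic{\cat{U}} \oplus \Z \struct{\p (\cat{Q})}(1)$, writes $\struct{\p (\cat{Q})}(Y) = \tilde{\pi}^*L_1 \otimes \struct{\p (\cat{Q})}(1)$, and restricts to the complement of $Y$. The only real difference is packaging: you run the excision sequence $\Z \xrightarrow{1 \mapsto [Y]} \Pic{\p(\cat{Q})} \to \Pic{\cat{M}'_{Hod}} \to 0$ once and read off both injectivity and surjectivity from the quotient description $\Pic{\p(\cat{Q})}/\Z[Y]$, while the paper obtains surjectivity this way and proves injectivity by a separate argument (a nowhere vanishing section of $p^*\xi$ is constant on each affine fibre $p^{-1}(E) \cong \C^N$, hence descends to a trivialization of $\xi$). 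Your version is slightly cleaner, though the phrase ``restriction annihilates the summand $\Z\,\xi$'' should be stated as: modulo $[Y]$ one has $\xi \equiv -\tilde{\pi}^*\alpha$, so the composite $\Pic{\cat{U}} \to \Pic{\p(\cat{Q})}/\Z[Y] \cong \Pic{\cat{M}'_{Hod}}$, which is $p^*$ by functoriality, is bijective. Your parenthetical alternative (homotopy invariance of $\Pic$ under vector bundle projections) is also legitimate here, precisely because the fibres $V(E)$ are vector spaces rather than mere affine torsors, which is the feature the paper itself emphasizes when constructing $\cat{Q}$.

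The genuine gap is in the $\iota^*$ half. You reduce, exactly as the paper does, to showing that $\cat{M}_{Hod} \setminus \cat{M}'_{Hod}$ has codimension at least $2$ (Lemma \ref{lem:1}), and your fibrewise reduction over $\A^1$ is sound since all fibres of $\Phi$ have dimension one less than $\dim{\cat{M}_{Hod}}$. But the actual estimate --- that the locus of stable $\lambda$-connections with non-stable underlying bundle has codimension at least $2$ in each fibre $\cat{M}_{DR}$, respectively $\cat{M}_{Dol}$ --- is exactly the content of this half of the theorem, and you do not prove it: you sketch a Harder--Narasimhan stratification with second fundamental forms and then explicitly defer the parameter count, acknowledging it as ``the main obstacle.'' That count is genuinely delicate (it is where the hypothesis $g \geq 3$ enters, and the needed inequality is sensitive to genus and rank), so asserting that it ``yields the desired bound'' leaves the proof incomplete. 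The paper does not carry out this count either; it closes the point by quoting the first statement of Proposition 1.2 of \cite{B}, which supplies precisely this codimension bound. So your proposal is structurally aligned with the paper's proof, but as written the $\iota^*$ part is a proof outline, not a proof: you must either execute the stratum-by-stratum dimension estimate or invoke a reference such as \cite{B} that does it.
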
 
\begin{proof}
First we show that $\iota^*$ is an isomorphism. In view
of Lemma \eqref{lem:1}, it is sufficient to show that
the compliment 
\begin{equation*}
\label{eq:12}
\cat{M}_{Hod} \setminus \cat{M}'_{Hod} \subset 
\cat{M}_{Hod}
\end{equation*}
is of codimension at least $2$ in $\cat{M}_{Hod}$.
Note that the dimension of the moduli space 
$\cat{M}_{Hod}$ is $2 n^2(g-1)+2$. From the first statement in
\cite{B}[Proposition 1.2, 1], we conclude that the
complement $\cat{M}_{Hod} \setminus \cat{M}'_{Hod} $
has codimension at least $2$ in $\cat{M}_{Hod}$.
% One can also add Hitchins proof where codimension is 
%atleast genus of curve.

Next to show that $p^*$ is an isomorphism. we first 
show that $p^*$ is injective.
Let $\xi \to 
\cat{U}$ be a line bundle such that $p^* \xi$ is a trivial line bundle
over $\cat{M}'_{Hod}$. Giving a trivialization of $p^* \xi$ is equivalent to
giving a nowhere vanishing section of $p^* \xi$ over $\cat{M}'_{Hod}$. Fix 
$s \in \coh{0}{\cat{M}'_{Hod}}{p^* \xi}$ a nowhere vanishing section. Take any
point $E \in \cat{U}$.  Then, from the following commutative diagram 

\begin{equation}
\label{eq:13}
\xymatrix{
p^* \xi \ar[d] \ar[r]^{\tilde{p}} & \xi \ar[d] \\
\cat{M}'_{Hod} \ar[r]^{p} & \cat{U}\\
}
\end{equation}

we get 

\begin{equation*}
\label{eq:14}
s|_{p^{-1}(E)}: p^{-1}(E) \to \xi(E)
\end{equation*}
 a nowhere vanishing map. Notice that $p^{-1}(E) \cong \C^N$ and $\xi(E) \cong \C$, where $N = n^2(g-1)+2$. Now, any nowhere vanishing algebraic function on an affine space
$\C^N$ is a constant function, that is, $s|_{p^{-1}(E)}$ is a constant
function and hence corresponds to a non-zero vector $\alpha_{E} \in \xi(E)$.
Since $s$ is constant on each fiber of $p$,  the trivialization $s$ of $p^*\xi$ descends to a trivialization of the
line bundle $\xi$ over $\cat{U}$, and hence giving a nowhere vanishing
section of $\xi$ over $\cat{U}$.  Thus,  $\xi$ is a trivial line bundle
over $\cat{U}$.

 It remains to show that $p^*$ is surjective. Let 
 $\vartheta \to \cat{M}'_{Hod}$ be an algebraic line 
 bundle. Since  $\cat{M}'_{Hod} \hookrightarrow
 \p (\cat{Q}) $ [follows from  \eqref{eq:7.1}, in the proof  of above
 Proposition \eqref{thm:1}], we can extend $\vartheta$ to a line bundle
 $\vartheta'$ over $\p (\cat{Q})$.
 Again from the morphism $\tilde{\pi}: \p(\cat{Q}) \to \cat{U}$ in \eqref{eq:7.2} in the above Theorem\eqref{thm:1} and from \cite{H}, Chapter III, Exercise 12.5, p.n. 291.,
we have
\begin{equation}
\label{eq:15}
 \Pic {\p (\cat{Q})} \cong \tilde{ \pi}^*\Pic{\cat{U}}\oplus  \Z \struct{\p (\cat{Q})}(1).
\end{equation}
Therefore,
\begin{equation}
\label{eq:16}
\vartheta' = \tilde{\pi}^* L \otimes \struct{\p (\cat{Q})}(l)
\end{equation}
where $L$ is a line bundle over $\cat{U}$ and 
$l \in \Z$.
Since $Y = \p (\cat{Q}) \setminus \cat{M}'_{Hod}$ is the 
hyperplane at infinity, using \eqref{eq:15} the line bundle 
$\struct{\p (\cat{Q})}(Y)$ associated to the divisor $Y$ can be expressed
as 
\begin{equation}
\label{eq:17}
\struct{\p (\cat{Q})}(Y) = \tilde{\pi}^* L_1 \otimes \struct{\p (\cat{Q})}(1)
\end{equation}
for some line bundle $L_1$ over $\cat{U}$.
Now, from \eqref{eq:16} and \eqref{eq:17}, we get
\begin{equation}
\label{eq:18}
\vartheta' = \tilde{\pi}^*(L \otimes 
(L_1^{\vee})^{\otimes l}) \otimes \struct{\p (\cat{Q})}(lY).
\end{equation}
Since, the restriction of the line bundle $\struct{\p 
(\cat{Q})}(Y)$ to the compliment $\p (\cat{Q}) \setminus Y = 
\cat{M}'_{Hod}$ is the trivial line bundle and restriction of $\tilde{\pi}$ to $\cat{M}'_{Hod}$ is the map $p$ defined in \eqref{eq:5}, therefore, we have
\begin{equation}
\label{eq:19}
\vartheta =  p^*(L \otimes (L_1^{\vee})^{\otimes l}).
\end{equation}
This completes the proof.
\end{proof}

Let $\M_{Hod} = \M_{Hod}(X,n)$ (see, \cite{HZ}) denote the coarse moduli 
space of isomorphism classes of polystable 
$\lambda$-connections of rank $n$ on $X$.   Then 
$\M_{Hod}$ is a quasi-projective variety that contains 
$\cat{M}_{Hod}$ as a Zariski open dense subset.

Although, we have assumed that genus $g \geq 3$, but the
following proposition is true for $g \geq 2$.
\begin{proposition}
\label{prop:0}Let $X$ be a compact Riemann surface of
genus $g \geq 2$ and $n \geq 2$.   Then we have 
\begin{equation}
\label{eq:19.2}
\Pic{\M_{Hod}} \cong \Pic{\cat{M}_{Hod}}
\end{equation}
\end{proposition}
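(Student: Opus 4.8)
The plan is to apply Lemma \eqref{lem:1} to the open inclusion $\cat{M}_{Hod}\hookrightarrow\M_{Hod}$; since $\M_{Hod}$ is quasi-projective and contains $\cat{M}_{Hod}$ as a dense open subset, it suffices to prove that the complement $\M_{Hod}\setminus\cat{M}_{Hod}$ has codimension at least $2$. This complement is precisely the strictly polystable locus, that is, the isomorphism classes that are polystable but not stable. By Definition \eqref{def:2}, every such class is represented by a direct sum $(E,\lambda,D)\cong\bigoplus_{i=1}^{k}(E_i,\lambda,D_i)$ of stable $\lambda$-connections of slope $0$, with $k\geq 2$, all sharing the \emph{same} scalar $\lambda$ (this is forced because the Leibniz rule \eqref{eq:2} of a single $\lambda$-connection uses one fixed $\lambda$). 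Writing $n_i=\rk{E_i}$, so that $\sum_i n_i=n$, I would stratify the complement by the unordered partition $(n_1,\dots,n_k)$ of $n$ recording the ranks of the Jordan--H\"older factors, and bound the dimension of each stratum separately.

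To estimate the dimension of the stratum attached to a partition $(n_1,\dots,n_k)$, observe that forming direct sums realises it, up to the finite ambiguity of permuting isomorphic factors, as the image of the fibre product
\[
\cat{M}_{Hod}(X,n_1)\times_{\A^1}\cdots\times_{\A^1}\cat{M}_{Hod}(X,n_k)
\]
under the direct-sum morphism, where the fibre products are taken with respect to the maps $\Phi$ of \eqref{eq:4} that record $\lambda$. Since the polystable factors are unique up to isomorphism and permutation, this morphism is finite onto its image, so the stratum and the fibre product have equal dimension; the sub-locus with repeated factors is of strictly smaller dimension and hence does not affect the bound. Using $\dim{\cat{M}_{Hod}(X,m)}=2m^2(g-1)+2$ together with the surjectivity of each $\Phi$ onto $\A^1$, the fibre product has dimension
\[
\sum_{i=1}^{k}\bigl(2n_i^2(g-1)+2\bigr)-(k-1)=2\bigl(\textstyle\sum_i n_i^2\bigr)(g-1)+k+1.
\]

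Subtracting this from $\dim{\M_{Hod}}=\dim{\cat{M}_{Hod}}=2n^2(g-1)+2$ and setting $S=\sum_{i<j}n_in_j>0$, so that $n^2-\sum_i n_i^2=2S$, the stratum has codimension $4S(g-1)+1-k$. For $g\geq 2$ one has $g-1\geq 1$ and $S\geq\binom{k}{2}$, whence this codimension is at least $2\binom{k}{2}\cdot 2+1-k=(2k-1)(k-1)\geq 3$ for every $k\geq 2$. Taking the union over the finitely many partitions shows that $\M_{Hod}\setminus\cat{M}_{Hod}$ has codimension at least $2$, and Lemma \eqref{lem:1} then yields the isomorphism \eqref{eq:19.2}. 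The main obstacle is the dimension estimate itself: one must check with care that the direct-sum parametrisation is finite onto each stratum, so that the permutation symmetry loses no dimension and the non-split extensions of the factors, being identified with the split one in the coarse space, contribute nothing extra, and that the requirement of a common $\lambda$ is faithfully encoded by the fibre product over $\A^1$; once this bookkeeping is correct the numerics are automatic. A secondary point is that $\M_{Hod}$ is only a coarse moduli space and may be singular along the strictly polystable locus, so to invoke Lemma \eqref{lem:1} as a statement about Picard groups one should also record that $\M_{Hod}$ is normal, which permits the comparison of line bundles across the codimension-$2$ complement.
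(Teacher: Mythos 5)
Your overall strategy coincides with the paper's: both proofs reduce Proposition \eqref{prop:0} to Lemma \eqref{lem:1} by showing that $W=\M_{Hod}\setminus\cat{M}_{Hod}$ has codimension at least $2$ in $\M_{Hod}$. The difference lies entirely in how that codimension bound is obtained: the paper's proof is a one-line citation of \cite{HZ}, Proposition 3.1, whereas you prove the bound from scratch by stratifying the strictly polystable locus according to the ranks $(n_1,\dots,n_k)$ of the Jordan--H\"older factors and bounding each stratum by the dimension of the fibre product over $\A^1$ of the lower-rank Hodge moduli spaces. Your count is correct: with the paper's formula $\dim{\cat{M}_{Hod}(X,m)}=2m^2(g-1)+2$, the stratum of type $(n_1,\dots,n_k)$ has codimension $4S(g-1)+1-k\geq(2k-1)(k-1)\geq 3$, where $S=\sum_{i<j}n_in_j$; and the estimate is robust, since if one instead uses $2m^2(g-1)+3$ (which is what a fibration over $\A^1$ with fibres $\cat{M}_{DR}$, $\cat{M}_{Dol}$ of dimension $2m^2(g-1)+2$ actually forces) the codimension becomes $4S(g-1)+2-2k\geq 2(k-1)^2\geq 2$, still sufficient. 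What your route buys is a self-contained, explicit bound in place of an external reference; what it costs is the need to know that the direct-sum map is an algebraic morphism on coarse moduli spaces (standard via the GIT construction, but worth recording). Note also that for the bound you only need $\dim(\mathrm{image})\leq\dim(\mathrm{source})$, which holds for any morphism, so the finiteness-onto-image discussion you spend effort on can simply be dropped.

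One caveat on your closing remark. Normality of $\M_{Hod}$ gives injectivity of $\Pic{\M_{Hod}}\to\Pic{\cat{M}_{Hod}}$ (a trivializing section extends across a codimension-$2$ complement), but it does not give surjectivity: extending a line bundle from the stable locus as a line bundle, rather than merely as a Weil divisor class, requires local factoriality, which can fail at strictly polystable (hence typically singular) points; indeed \cite{H}, Chapter II, Proposition 6.5(b), the source of Lemma \eqref{lem:1}, is a statement about divisor class groups of schemes regular in codimension one, not about Picard groups of arbitrary quasi-projective varieties. This is not a defect of your argument relative to the paper --- the paper applies Lemma \eqref{lem:1} to the possibly singular $\M_{Hod}$ with no comment at all --- but your proposed fix (normality) undershoots what the lemma actually needs, so the point you rightly flag should be stated as factoriality of $\M_{Hod}$, not normality.
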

\begin{proof}
In view of  Lemma\eqref{lem:1}, it is sufficient to show 
that the codimension of the closed set $W = \M_{Hod} \setminus \cat{M}_{Hod}$ in $\M_{Hod}$
is atleast $2$. And that follows from \cite{HZ},
Proposition 3.1.
\end{proof}

\section{Algebraic functions on the Hodge moduli space}
\label{Alg-fun}

In this section we will study the algebraic functions 
on the Hodge moduli space with fixed determinant.
Fix a line bundle $L$ over the compact Riemann surface 
$X$ of degree zero. For each $\lambda \in \C$, fix a $\lambda$-connection $D^{\lambda}_{L}$ on $L$ such that
for any $\lambda_1, \lambda_2 \in \C$, we have 
\begin{equation}
\label{eq:18.1}
D^{\lambda_1}_L + D^{\lambda_2}_L = D^{\lambda_1 + \lambda_2}_L.
\end{equation}

Let  $ \cat{M}_{Hod}(L) = \cat{M}_{Hod}(X,n,L)$ denote 
the moduli space parametrising all stable $\lambda$-connections $(E, \lambda, D)$ such that

\begin{enumerate}
\item $E$ is a holomorphic vector bundle of rank $n$ 
over $X$ with $\bigwedge^{n}E \cong L $.

\item  for every $\lambda \in \C$, the 
$\lambda$-connection on $\bigwedge^{n}E$ induced by
$D$ coincides with the given $\lambda$-connection 
$D^{\lambda}_{L}$ on $L$.
\end{enumerate}

Then $\cat{M}_{Hod}(L)$ is a smooth quasi-projective
 closed subvariety of $\cat{M}_{Hod}$ of dimension 
 $2(n^2-1)(g-1)+1$. Let $\cat{M}'_{Hod}(L) = \cat{M}
 _{Hod}(L) \cap \cat{M}'_{Hod}$, that is, $
 \cat{M}'_{Hod}(L)$ consists of those points of $\cat{M}
 _{Hod}(L)$ whose underlying vector bundle is stable.
 Then $\cat{M}'_{Hod}(L)$ is a Zariski open dense subset
 of $\cat{M}_{Hod}(L)$ (openness follows from [\cite{M}, p.635, Theorem 2.8(B)]).
 
 Again, we have a
 natural surjective algebraic morphism 
 \begin{equation}
 \label{eq:20}
 \Phi_L: \cat{M}_{Hod}(L) \to \A^1
 \end{equation}
 defined by $(E,\lambda, D) \mapsto \lambda$. 
 Let $\Phi^\lambda_L$ denote the inverse image 
 $\Phi^{-1}_L(\lambda)$. 
 Then, $\Phi^1_L = \cat{M}_{DR}(X,n,L)$ and 
 $\Phi^0_L = \cat{M}_{Dol}(X,n,L)$ are de Rham moduli 
 space and Dolbeault moduli space with fixed determinant 
 $L$, respectively.
 Let $\cat{U}_L$ be the moduli space  
parametrising all the stable holomorphic vector bundle 
$E$   on $X$ of rank $n$ and degree zero such that $\bigwedge^{n}E \cong L$.

Let 
\begin{equation}
\label{eq:22}
p_0: \cat{M}'_{Hod}(L) \to \cat{U}_{L}
\end{equation}
be the projection defined by sending $(E, \lambda, D)$ 
to $E$.  Then  from the
  assumption $\eqref{eq:18.1}$ on the fixed family $(L, \lambda, D^\lambda_L)_{\lambda \in \C}$, for every $E \in \cat{U}_{L}$, 
  $p_0^{-1}(E)$ is a vector space of dimension 
  $(n^2-1)(g-1)+1$.
  Note that if we remove the assumption \eqref{eq:18.1}
  from the fixed family $(L, \lambda, D^\lambda_L)_{\lambda \in \C}$, $p_0^{-1}(E)$ need not
  be a vector space.
   Now, we have following result
  similar to Proposition \eqref{thm:1}.
  
  \begin{proposition}
  \label{thm:3}
  There exists an algebraic vector bundle $\pi': \Xi' \to \cat{U}_L$ such that 
$\cat{M}'_{Hod}(L)$ is embedded in $\p (\Xi')$ with $\p (\Xi') \setminus \cat{M}'_{Hod}(L)$ as the
hyperplane at infinity.
  \end{proposition}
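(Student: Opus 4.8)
The plan is to transport, essentially verbatim, the construction carried out in the proof of Proposition \ref{thm:1} to the fixed-determinant setting, replacing $\cat{U}$ by $\cat{U}_L$, the projection $p$ by the projection $p_0$ of \eqref{eq:22}, and the full fibre $V(E)$ by $p_0^{-1}(E)$. The one structural fact that makes the whole argument go through is that, by the additivity hypothesis \eqref{eq:18.1} on the fixed family $(L,\lambda,D^\lambda_L)_{\lambda \in \C}$, each fibre $p_0^{-1}(E)$ is again a genuine finite-dimensional $\C$-vector space, of dimension $(n^2-1)(g-1)+1$. As already remarked after \eqref{eq:22}, dropping \eqref{eq:18.1} would destroy this linear structure, so the hypothesis is precisely what is needed here.

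First I would dualize the fibres of $p_0$ to produce the vector bundle $\pi' : \Xi' \to \cat{U}_L$, setting, for each Zariski open $U \subseteq \cat{U}_L$,
\begin{equation*}
\Xi'(U) = \{\, f : p_0^{-1}(U) \to \A^1 \ \text{regular} \mid f|_{p_0^{-1}(E)} \in (p_0^{-1}(E))^{\vee}\ \text{for every } E \in U \,\},
\end{equation*}
in exact analogy with \eqref{eq:7.05}. Since $\cat{U}_L$ is the fibre of the determinant morphism $\cat{U} \to J(X)$ over $L$, and since each $p_0^{-1}(E)$ is a linear subspace of the fibre $V(E)$ of $p$, the inclusions $p_0^{-1}(E) \hookrightarrow V(E)$ dualize to a surjective morphism of vector bundles $\cat{Q}|_{\cat{U}_L} \to \Xi'$; in particular $\Xi'$ is locally free, being a quotient of the bundle $\cat{Q}$ constructed in Proposition \ref{thm:1} restricted to $\cat{U}_L$. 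I then let $\p(\Xi')$ denote the associated bundle of hyperplanes in the fibres of $\pi'$.

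Next, for $(E,\lambda,D) \in \cat{M}'_{Hod}(L)$ I would form the evaluation functional $\eta_{(E,\lambda,D)} : (p_0^{-1}(E))^{\vee} \to \C$, $\varphi \mapsto \varphi[(E,\lambda,D)]$, whose kernel $\Ker{\eta_{(E,\lambda,D)}}$ is a hyperplane $H_{(E,\lambda,D)}$ in the fibre of $\pi'$. This produces a morphism
\begin{equation*}
\iota' : \cat{M}'_{Hod}(L) \longrightarrow \p(\Xi'), \qquad (E,\lambda,D) \longmapsto H_{(E,\lambda,D)},
\end{equation*}
the fixed-determinant counterpart of \eqref{eq:7.1}, which I claim is an open embedding. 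Writing $\tilde{\pi}' : \p(\Xi') \to \cat{U}_L$ for the induced projection, as in \eqref{eq:7.2}, and $Y' = \p(\Xi') \setminus \cat{M}'_{Hod}(L)$, the final step is to check fibrewise, over each $E \in \cat{U}_L$, that ${\tilde{\pi}'}^{-1}(E) \cap Y'$ is a linear hyperplane in the projective space ${\tilde{\pi}'}^{-1}(E)$; this exhibits $Y'$ as the hyperplane at infinity.

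Because all of these steps are formally identical to those of Proposition \ref{thm:1}, I would indicate the parallel rather than reproduce the computations. The only point that genuinely requires care is the algebraicity of the family: that the fibrewise vector-space structures on $p_0^{-1}(E)$ assemble into an algebraic vector bundle and that $\iota'$ is a morphism of varieties. Realizing $\Xi'$ as the quotient of $\cat{Q}|_{\cat{U}_L}$ described above is exactly what makes this immediate, since it reduces both assertions to the corresponding facts already established for $\cat{Q}$; moreover the compatibility $\iota' = \iota|_{\cat{M}'_{Hod}(L)}$ transfers the open-embedding property directly from Proposition \ref{thm:1}.
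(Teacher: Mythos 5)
Your proposal is correct and follows exactly the paper's intended argument: the paper's own proof of this proposition is literally the citation ``cf.\ Proposition \eqref{thm:1}'', i.e.\ a verbatim transport of that construction to the fixed-determinant setting, which is what you carry out, including the key observation that the additivity hypothesis \eqref{eq:18.1} makes each fibre $p_0^{-1}(E)$ a genuine vector space. Your additional remark realizing $\Xi'$ as a quotient of $\cat{Q}\vert_{\cat{U}_L}$ is a harmless supplement (not in the paper) that, if anything, makes the algebraicity of the family more transparent.
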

  \begin{proof}
  cf. Proposition \eqref{thm:1}
  \end{proof}
  
  \begin{proposition}
  \label{thm:4}
   The homomorphism $p_0^*: Pic(\cat{U}_{L}) \to 
   Pic(\cat{M}'_{Hod}(L))$ defined by $\xi \mapsto p_0^* 
   \xi$ is an isomorphism of groups.
  \end{proposition}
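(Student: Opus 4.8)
The plan is to run, verbatim, the two-step argument used for $p^*$ in the proof of Theorem \ref{thm:2}, with $\cat{U}$ replaced by $\cat{U}_L$, with $\cat{M}'_{Hod}$ replaced by $\cat{M}'_{Hod}(L)$, with the projection $p$ replaced by $p_0$ of \eqref{eq:22}, and with the compactification of Proposition \ref{thm:1} replaced by that of Proposition \ref{thm:3}. For injectivity, suppose $\xi \to \cat{U}_L$ is a line bundle with $p_0^* \xi$ trivial, and fix a nowhere vanishing section $s \in \coh{0}{\cat{M}'_{Hod}(L)}{p_0^* \xi}$. For each $E \in \cat{U}_L$ the fibre $p_0^{-1}(E)$ is, by the compatibility assumption \eqref{eq:18.1} on the fixed family $(L,\lambda,D^\lambda_L)_{\lambda \in \C}$, a genuine $\C$-vector space of dimension $N = (n^2-1)(g-1)+1$, so $p_0^{-1}(E) \cong \C^N$ and $\xi(E) \cong \C$. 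The restriction $s|_{p_0^{-1}(E)}$ is then a nowhere vanishing regular function on the affine space $\C^N$, hence constant, since the only units in the coordinate ring of $\C^N$ are the nonzero scalars. Therefore $s$ is constant along the fibres of $p_0$ and descends to a nowhere vanishing section of $\xi$ over $\cat{U}_L$, so $\xi$ is trivial and $p_0^*$ is injective.

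For surjectivity, let $\vartheta \to \cat{M}'_{Hod}(L)$ be an arbitrary line bundle. By Proposition \ref{thm:3}, $\cat{M}'_{Hod}(L)$ embeds as a Zariski open subset of $\p(\Xi')$ with $Y' = \p(\Xi') \setminus \cat{M}'_{Hod}(L)$ the hyperplane at infinity, so $\vartheta$ extends to a line bundle $\vartheta'$ over $\p(\Xi')$. By the projective bundle formula for Picard groups (as in \eqref{eq:15}),
\[
\Pic{\p(\Xi')} \cong (\pi')^* \Pic{\cat{U}_L} \oplus \Z\, \struct{\p(\Xi')}(1),
\]
we may write $\vartheta' = (\pi')^* L' \otimes \struct{\p(\Xi')}(l)$ for some line bundle $L'$ over $\cat{U}_L$ and some $l \in \Z$, while the same formula gives $\struct{\p(\Xi')}(Y') = (\pi')^* L_1' \otimes \struct{\p(\Xi')}(1)$ for some $L_1'$ over $\cat{U}_L$, because $Y'$ is a relative hyperplane. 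Combining these two expressions exactly as in \eqref{eq:18} and restricting to $\cat{M}'_{Hod}(L) = \p(\Xi') \setminus Y'$, where $\struct{\p(\Xi')}(Y')$ restricts trivially and $\pi'$ restricts to $p_0$, yields $\vartheta = p_0^*\bigl(L' \otimes (L_1'^{\vee})^{\otimes l}\bigr)$. Hence $p_0^*$ is surjective, and the two steps together give the claimed isomorphism.

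Since the whole argument is parallel to Theorem \ref{thm:2}, no genuinely new obstacle arises; the only point that must be handled with care is the first step. There the proof uses decisively that $p_0^{-1}(E)$ is a \emph{vector space} and not merely an affine space or a torsor, so that it is isomorphic to $\C^N$ and the nonvanishing-implies-constant argument applies. This is precisely where the additivity hypothesis \eqref{eq:18.1} on the fixed family is indispensable, as noted in the remark preceding Proposition \ref{thm:3}: without it the fibre of $p_0$ need not be a vector space, and the descent of the trivializing section would fail. All remaining ingredients — the projective bundle structure of Proposition \ref{thm:3} and the Picard group formula for projectivized vector bundles — are already available, so the verification is routine once this structural point is in place.
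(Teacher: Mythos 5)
Your proposal is correct and is exactly the paper's intended argument: the paper's own proof of Proposition \ref{thm:4} is just the one-line remark that it is ``similar to the proof of Theorem \ref{thm:2},'' and you have carried out precisely that transcription, with $p_0$, $\cat{U}_L$ and the compactification of Proposition \ref{thm:3} in place of $p$, $\cat{U}$ and Proposition \ref{thm:1}. You also correctly flag the one point the substitution depends on --- that the additivity hypothesis \eqref{eq:18.1} makes each fibre $p_0^{-1}(E)$ a genuine vector space of dimension $(n^2-1)(g-1)+1$, so the nonvanishing-implies-constant step goes through --- which is the same structural remark the paper makes just before Proposition \ref{thm:3}.
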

  \begin{proof}
  The proof is similar to the proof of the Theorem 
  \eqref{thm:2}.
  \end{proof}
  
  Now, from \cite{R}, Proposition 3.4, (ii), we have 
  $Pic(\cat{U}_{L}) \cong \Z$. Thus, in view of 
Theorem \eqref{thm:4}, we have 
\begin{corollary}
\label{cor:1}
$Pic(\cat{M}'_{Hod}(L)) \cong \Z.$
\end{corollary}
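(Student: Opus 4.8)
The plan is to obtain the corollary by chaining together the isomorphism furnished by Proposition \ref{thm:4} with the classical computation of the Picard group of the fixed-determinant moduli space of stable bundles. The statement is a direct consequence of results already in place, so the proof proposal is essentially a matter of identifying the correct two-step composition of group isomorphisms.

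First I would invoke Proposition \ref{thm:4}, which asserts that the pullback homomorphism $p_0^* : \Pic{\cat{U}_L} \to \Pic{\cat{M}'_{Hod}(L)}$ is an isomorphism of groups. This is the substantive geometric input: it reduces the computation of $\Pic{\cat{M}'_{Hod}(L)}$ entirely to understanding the Picard group of the base $\cat{U}_L$, the moduli space of stable rank-$n$ bundles of degree zero with fixed determinant $L$. The reason this reduction holds is exactly the content imported from Theorem \ref{thm:2}: the fibres of $p_0$ are finite-dimensional vector spaces (of dimension $(n^2-1)(g-1)+1$, using hypothesis \eqref{eq:18.1}), so injectivity of $p_0^*$ follows from the fact that a nowhere-vanishing algebraic function on an affine space is constant, and surjectivity follows from the embedding of $\cat{M}'_{Hod}(L)$ into the projective bundle $\p(\Xi')$ as the complement of the hyperplane at infinity, together with the description of $\Pic{\p(\Xi')}$ as an extension of $\Pic{\cat{U}_L}$ by $\Z$.

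Second I would cite the known value $\Pic{\cat{U}_L} \cong \Z$, which is Proposition 3.4(ii) of \cite{R}. Composing the two isomorphisms $\Z \cong \Pic{\cat{U}_L} \xrightarrow{p_0^*} \Pic{\cat{M}'_{Hod}(L)}$ then yields $\Pic{\cat{M}'_{Hod}(L)} \cong \Z$, completing the argument.

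The corollary itself presents no genuine obstacle, since both ingredients are already established. The real difficulty has been pushed back into Proposition \ref{thm:4}, and there the delicate point is verifying that the fibres of $p_0$ are genuinely vector spaces: this relies crucially on the normalization condition \eqref{eq:18.1} imposed on the family $(L,\lambda,D^\lambda_L)_{\lambda \in \C}$, without which (as the text remarks) $p_0^{-1}(E)$ need not even be a vector space, and the entire affine-space argument underlying the isomorphism $p_0^*$ would break down.
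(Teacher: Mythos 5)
Your proposal matches the paper's own proof exactly: Corollary \ref{cor:1} is obtained by composing the isomorphism $p_0^*:\Pic{\cat{U}_L}\to\Pic{\cat{M}'_{Hod}(L)}$ of Proposition \ref{thm:4} with Ramanan's computation $\Pic{\cat{U}_L}\cong\Z$ (\cite{R}, Proposition 3.4(ii)). Your additional remarks on why Proposition \ref{thm:4} holds (the vector-space structure of the fibres of $p_0$ via condition \eqref{eq:18.1}, and the projective-bundle compactification) correctly reflect the paper's supporting arguments but are not needed for the corollary itself.
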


Now, consider $\Phi^1_L = \cat{M}_{DR}(X,n,L)$ the 
moduli space of $1$-connection on $X$ with fixed
determinant $(L,1,D^1_L)$ that is nothing but moduli
space of holomorphic connections $(E,1,D)$ on $X$ with fixed 
determinant $(L, D^1_L)$.
Let $\cat{M}'_{DR}(L) \subset \Phi^1_L$ denote the 
Zariski open subvariety such that the underlying
vector bundle is stable (openness follows from 
[\cite{M}, p.635, Theorem 2.8(B)]).

Let 
\begin{equation}
\label{eq:23}
q: \cat{M}'_{DR}(L) \to \cat{U}_L
\end{equation}
be the canonical projection map defined by sending 
$(E,D)$ to $E$. From Atiyah \cite{A} and Weil \cite{W},
being an indecomposable vector bundle of degree zero, any $E \in \cat{U}_L$ admits a holomorphic connection.
Thus $q$ is a surjective morphism.

First note that for any $E \in \cat{U}_{L}$, the 
 holomorphic cotangent space $\Omega^1_{\cat{U}_{L},E}$ at $E$ is isomorphic to $\coh{0}{X}{\Omega^1_X 
 \otimes \ad{E}}$, where $\ad{E} \subset \ENd{E}$ is the 
 subbundle consists of endomorphism of $E$ whose trace is 
 zero. Also, $q^{-1}(E)$ is an affine space modelled 
 over $\coh{0}{X}{\Omega^1_X \otimes \ad{E}}$. Thus, 
 there is a natural action of $\Omega^1_{\cat{U}_{L},E}$  on $q^{-1}(E)$, that is,
\begin{equation*}
\label{eq:24.1}
\Omega^1_{\cat{U}_{L},E} \times q^{-1}(E) \to 
q^{-1}(E)
\end{equation*}
sending $(\omega, D)$ to $\omega + D$, which is faithful
and transitive. Thus, we have (see \cite{S} for the definition of torsor and similar results for instance
\cite{S}
Proposition 4.2)

\begin{proposition}
\label{prop:1}
Let $q:\cat{M}'_{DR}{(L)} \to \cat{U}_{L}$ be the map as defined in 
\eqref{eq:23}. Then $\cat{M}'_{DR}{(L)}$ is a 
$\Omega^1_{\cat{U}_{L}}$-\emph{torsor} on 
${\cat{U}_{L}}$.
\end{proposition}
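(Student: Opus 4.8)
The plan is to check the three defining conditions of a torsor under the cotangent bundle $\Omega^1_{\cat{U}_L}$: an algebraic action of $\Omega^1_{\cat{U}_L}$ on $\cat{M}'_{DR}(L)$ over $\cat{U}_L$, simple transitivity of this action on every fibre of $q$, and local triviality of $q$. The purely fibrewise ingredients are already recorded above, namely the identification $\Omega^1_{\cat{U}_L, E} \cong \coh{0}{X}{\Omega^1_X \otimes \ad{E}}$, the fact that $q^{-1}(E)$ is an affine space modelled on $\coh{0}{X}{\Omega^1_X \otimes \ad{E}}$, and the faithfulness and transitivity of the fibrewise action $(\omega, D) \mapsto \omega + D$. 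What remains is to make these statements global and algebraic.

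First I would promote the pointwise rule $(\omega, D) \mapsto \omega + D$ to a morphism of varieties
\begin{equation*}
\mu : \Omega^1_{\cat{U}_L} \times_{\cat{U}_L} \cat{M}'_{DR}(L) \longrightarrow \cat{M}'_{DR}(L)
\end{equation*}
over $\cat{U}_L$. I would do this relatively over $X \times \cat{U}_L$: étale-locally on $\cat{U}_L$ there is a universal bundle $\mathcal{E}$, and the relative Atiyah sequence exhibits $q$ as the affine bundle of relative holomorphic connections on $\mathcal{E}$ whose induced connection on the determinant is the fixed $D^1_L$; adding a global section along $X$ of $\Omega^1_X \otimes \ad{\mathcal{E}}$ to such a connection is visibly algebraic, and the local descriptions glue. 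In the same step I must upgrade the fibrewise identification with $\coh{0}{X}{\Omega^1_X \otimes \ad{E}}$ to an isomorphism of vector bundles on $\cat{U}_L$: via Serre duality and the self-duality $(\ad{E})^{\vee} \cong \ad{E}$ of the trace-free endomorphism bundle, the relative $H^0$ is exactly $\Omega^1_{\cat{U}_L}$, so $q$ is an affine bundle modelled on $\Omega^1_{\cat{U}_L}$ and $\mu$ is an action by this bundle.

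Next I would verify simple transitivity. Faithfulness and transitivity on each $q^{-1}(E)$ say precisely that
\begin{equation*}
(\mu, \pr{2}) : \Omega^1_{\cat{U}_L} \times_{\cat{U}_L} \cat{M}'_{DR}(L) \longrightarrow \cat{M}'_{DR}(L) \times_{\cat{U}_L} \cat{M}'_{DR}(L)
\end{equation*}
is a fibrewise isomorphism of affine spaces; since source and target are smooth over $\cat{U}_L$, the differential of $(\mu,\pr{2})$ is everywhere invertible, so it is étale and bijective, hence an isomorphism of varieties. That the fibres are nonempty — so that the torsor is genuine — is exactly surjectivity of $q$, which holds by Atiyah and Weil: a stable, hence indecomposable, degree-zero bundle admits a holomorphic connection, and one subtracts an appropriate scalar $1$-form to arrange that the induced connection on $\bigwedge^n E \cong L$ equals $D^1_L$.

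Finally, for local triviality I would cover $\cat{U}_L$ by affine opens $U$. The affine bundle $q$ restricted to $U$ is classified by a class in $\coh{1}{U}{\Omega^1_{\cat{U}_L}}$, which vanishes by Serre's theorem since $\Omega^1_{\cat{U}_L}$ is coherent and $U$ is affine; hence $q$ admits a section over $U$, and the action $\mu$ turns such a section into an equivariant trivialization $q^{-1}(U) \cong \restrict{\Omega^1_{\cat{U}_L}}{U}$. This is exactly local triviality, completing the verification. I expect the genuine obstacle to lie in the first step — constructing $\mu$ as an algebraic morphism and, above all, identifying the model vector bundle with $\Omega^1_{\cat{U}_L}$ in families rather than merely fibrewise — which needs the relative Atiyah sequence over $X \times \cat{U}_L$ and some care with the possible absence of a global universal bundle; simple transitivity and local triviality are then comparatively formal.
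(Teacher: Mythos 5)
Your proposal is correct and takes essentially the same approach as the paper: the paper's proof of this proposition is just the fibrewise discussion preceding it (the identification $\Omega^1_{\cat{U}_L,E} \cong \coh{0}{X}{\Omega^1_X \otimes \ad{E}}$, the affine structure on $q^{-1}(E)$, and the faithful transitive action $(\omega,D)\mapsto \omega+D$), together with a citation of \cite{S}, Proposition 4.2, for the torsor formalism and the remaining details. Your write-up supplies exactly the globalization that the paper delegates to that citation — the relative Serre duality identification of the model bundle with $\Omega^1_{\cat{U}_L}$, simple transitivity as an isomorphism onto the fibre product, the determinant-normalization step in the Atiyah--Weil surjectivity argument, and local triviality over affine opens — so it is a fleshed-out version of the same argument rather than a different one.
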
  

\begin{proposition}
\label{prop:2}
 The homomorphism $q^*: Pic(\cat{U}_{L}) \to 
   Pic(\cat{M}'_{DR}(L))$ defined by $\xi \mapsto q^* 
   \xi$ is an isomorphism of groups.
\end{proposition}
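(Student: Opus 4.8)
The plan is to follow the template of the proof of Theorem \eqref{thm:2}, now using the torsor structure recorded in Proposition \eqref{prop:1} in place of the vector-bundle structure exploited there. By Proposition \eqref{prop:1} the map $q$ exhibits $\cat{M}'_{DR}(L)$ as an $\Omega^1_{\cat{U}_L}$-\emph{torsor}, so for each stable $E \in \cat{U}_L$ the fibre $q^{-1}(E)$ is an affine space modelled on $\coh{0}{X}{\Omega^1_X \otimes \ad{E}}$, hence isomorphic to $\A^M$ with $M = \dim[\C]{\coh{0}{X}{\Omega^1_X \otimes \ad{E}}} = (n^2-1)(g-1)$. Both injectivity and surjectivity of $q^*$ will be deduced from this affine-bundle picture exactly as in Theorem \eqref{thm:2}.

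For injectivity, suppose $\xi \to \cat{U}_L$ is a line bundle with $q^*\xi$ trivial, and fix a nowhere vanishing section $s \in \coh{0}{\cat{M}'_{DR}(L)}{q^*\xi}$. For each $E \in \cat{U}_L$ the restriction $s|_{q^{-1}(E)}$ is a nowhere vanishing regular function on $q^{-1}(E) \cong \A^M$, hence a nonzero constant, since the only units of the ring of regular functions on an affine space are the nonzero scalars. Thus $s$ is constant along the fibres of $q$ and descends to a nowhere vanishing section of $\xi$ over $\cat{U}_L$, so $\xi$ is trivial; this shows $q^*$ is injective.

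For surjectivity, I would first compactify the torsor fibrewise into a projective bundle over $\cat{U}_L$, mirroring Propositions \eqref{thm:1} and \eqref{thm:3}. Concretely, $\cat{M}'_{DR}(L)$ is classified by a class in $\coh{1}{\cat{U}_L}{\Omega^1_{\cat{U}_L}} = \mathrm{Ext}^1(\struct{\cat{U}_L}, \Omega^1_{\cat{U}_L})$, and the associated extension $0 \to \Omega^1_{\cat{U}_L} \to W \to \struct{\cat{U}_L} \to 0$ yields a projective bundle $\tilde{\pi} : \p(W) \to \cat{U}_L$ in which $\cat{M}'_{DR}(L)$ embeds as the complement of the relative hyperplane $\p(\Omega^1_{\cat{U}_L}) \subset \p(W)$, i.e. the hyperplane at infinity. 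Given a line bundle $\vartheta \to \cat{M}'_{DR}(L)$, I would extend it to $\vartheta'$ on $\p(W)$, write $\vartheta' = \tilde{\pi}^* L_0 \otimes \struct{\p(W)}(l)$ using the splitting $\Pic{\p(W)} \cong \tilde{\pi}^*\Pic{\cat{U}_L} \oplus \Z\,\struct{\p(W)}(1)$, and absorb the factor $\struct{\p(W)}(l)$ against the hyperplane-at-infinity class exactly as in \eqref{eq:17}--\eqref{eq:19}. Since that hyperplane restricts trivially to $\cat{M}'_{DR}(L)$ and $\tilde{\pi}$ restricts to $q$ on the complement, one obtains $\vartheta \cong q^*(L_0 \otimes (L_1^{\vee})^{\otimes l})$ for suitable line bundles $L_0, L_1$ on $\cat{U}_L$, proving surjectivity.

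The main obstacle is the construction of the compactification: unlike the situation of Theorem \eqref{thm:2}, where $\cat{M}'_{Hod}$ fibres in honest vector spaces and one dualises to build $\cat{Q}$, here the fibres of $q$ are only affine spaces, so one must pass through the extension class of the torsor to produce the ambient projective bundle $\p(W)$ and verify that the complement of $\cat{M}'_{DR}(L)$ is precisely a relative hyperplane. Once this identification is in place, the Picard-group bookkeeping is formally identical to that of Theorem \eqref{thm:2}.
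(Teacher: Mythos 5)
Your proof is correct, and its skeleton --- injectivity from the fact that a nowhere vanishing regular function on an affine fibre is a nonzero constant, surjectivity from a fibrewise projective completion together with the decomposition of the Picard group of a projective bundle --- is exactly the argument the paper intends, since the paper's own proof of this proposition is a one-line deferral to Theorem \eqref{thm:2}. The place where you genuinely depart from (and improve on) that deferral is the compactification, and you are right that this is the only nontrivial point: the construction \eqref{eq:7.05} of $\cat{Q}$ in Proposition \eqref{thm:1} uses functions that are \emph{linear} on each fibre, which makes no sense verbatim for the fibres of $q$, as an affine space has no origin. Your remedy --- classify the torsor by its class in $\coh{1}{\cat{U}_L}{\Omega^1_{\cat{U}_L}}$, form the extension $0 \to \Omega^1_{\cat{U}_L} \to W \to \struct{\cat{U}_L} \to 0$ with that class, and embed $\cat{M}'_{DR}(L)$ into $\p(W)$ as the complement of the relative hyperplane $\p(\Omega^1_{\cat{U}_L})$ --- is dual to the fix the paper's template would require (take fibrewise \emph{affine-linear} functions; that bundle is precisely $W^{\vee}$), so the two routes produce the same compactification. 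It is also worth noting that your $W$ already appears in the paper: by the dual Atiyah sequence \eqref{eq:27} and the identification \eqref{eq:29}, the torsor $\cat{M}'_{DR}(L) \cong \cat{C}(\Theta)$ has extension class a nonzero multiple of the Atiyah class of the ample generator $\Theta$, so one may take $\p(W) = \p(\At{\Theta}^*)$ with hyperplane at infinity $\p(\Omega^1_{\cat{U}_L})$; this exhibits your construction as the fixed-determinant de Rham analogue of Propositions \eqref{thm:1} and \eqref{thm:3}, after which your Picard-group bookkeeping goes through exactly as in Theorem \eqref{thm:2}.
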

\begin{proof}
The arguments involved in the proof is parallel to  the Theorem
\eqref{thm:2}.
\end{proof}

Now, from \cite{R}, Proposition 3.4, (ii), we have $Pic(\cat{U}_{L}) \cong \Z$. Thus, in view of 
Proposition \eqref{prop:2}, we have 
\begin{equation}
\label{eq:24}
Pic(\cat{M}'_{DR}(L)) \cong \Z.
\end{equation}

Let $\Theta$ be the ample generator of the group $Pic(\cat{U}_{L})$. 
We have 
\textbf{Atiyah exact sequence}
\begin{equation}
\label{eq:26}
0 \to \struct{\cat{U}_{L}} \xrightarrow{\imath} 
\At{\Theta} \xrightarrow{\sigma}   T\cat{U}_{L}
\to 0,
\end{equation}
where $\At{\Theta} $ is called
\textbf{Atiyah algebra} of holomorphic line bundle $
\Theta$, and in this case it is equal to $\DifF[1]
{\Theta}{\Theta}$, the sheaf of first order differential
operators on $\Theta$ for more details see \cite{A}.

Dualising the above  Atiyah exact sequence \eqref{eq:26},
we get following exact sequence,

\begin{equation}
\label{eq:27}
0 \to \Omega^1_{\cat{U}_{L}} 
\xrightarrow{\sigma^*} 
\At{\Theta}^* \xrightarrow{\imath^*}   \struct{\cat{U}
_{L}} \to 0
\end{equation}

Consider $\struct{\cat{U}_{L}}$ as trivial line 
bundle $\cat{U}_{L} \times \C$. Let $t: \cat{U}
_{L}\to \cat{U}_{L} \times \C$ be a
holomorphic map defined by $E \mapsto (E,1)$. Then $t$ is
a holomorphic  section of the trivial line bundle $\cat{U}_{L} \times \C$.

Let $T = \Img{t} \subset \cat{U}_{L} \times \C$ be the image of $t$. 
Then $T \to \cat{U}_{L}$ is a fibre bundle.
Consider the inverse image ${\imath^*}^{-1}T \subset 
\At{\Theta}^*$, and denote it by $\cat{C}(\Theta)$.
Using the same technique as in \cite{S}, (see Proposition 5.4), we can show that
\begin{equation}
\label{eq:29}
\cat{C}(\Theta) \cong \cat{M}'_{DR}(L)
\end{equation}

\begin{lemma}
\label{thm:5}
Assume that $ \text{genus}(X) \geq  3$. Then
\begin{equation}
\label{eq:30}
\coh{0}{\cat{M}'_{DR}(L)}{\struct{\cat{M}'_{DR}(L)}} =
\C.
\end{equation}
\end{lemma}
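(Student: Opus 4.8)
The plan is to exploit the isomorphism $\cat{M}'_{DR}(L) \cong \cat{C}(\Theta)$ of \eqref{eq:29}, which realises $\cat{M}'_{DR}(L)$ as the affine bundle of connections on the ample generator $\Theta$ of $\Pic{\cat{U}_{L}} \cong \Z$; equivalently, by Proposition \eqref{prop:1}, as an $\Omega^1_{\cat{U}_{L}}$-torsor $q : \cat{M}'_{DR}(L) \to \cat{U}_{L}$. The goal is to show that every global regular function descends along $q$ and is therefore constant. The final descent step is the easy one: $\cat{U}_{L}$ is a normal variety sitting inside the normal projective moduli space $\bar{\cat{U}}_{L}$ of semistable bundles with fixed determinant as a big open subset (complement of codimension at least $2$, for $g \geq 2$ and $n \geq 2$), so restriction gives $\coh{0}{\cat{U}_{L}}{\struct{\cat{U}_{L}}} = \coh{0}{\bar{\cat{U}}_{L}}{\struct{\bar{\cat{U}}_{L}}} = \C$.

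First I would reduce everything to the base $\cat{U}_{L}$. Pushing $\struct{\cat{M}'_{DR}(L)}$ forward along $q$ and filtering it by fibrewise polynomial degree, one gets $F_{0} = \struct{\cat{U}_{L}}$ and, since the fibres are affine spaces modelled on $\Omega^1_{\cat{U}_{L}}$, graded pieces $F_{d}/F_{d-1} \cong \mathrm{Sym}^{d} T\cat{U}_{L}$. A global function $f$ of top fibrewise degree $d$ then has a well-defined symbol $\sigma_{d}(f) \in \coh{0}{\cat{U}_{L}}{\mathrm{Sym}^{d} T\cat{U}_{L}}$. If I can show $\sigma_{d}(f) = 0$ for every $d \geq 1$, then $f$ is constant on the fibres of $q$, hence $f = q^{*}f_{0}$ with $f_{0} \in \coh{0}{\cat{U}_{L}}{\struct{\cat{U}_{L}}} = \C$, proving the lemma.

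The heart of the argument is thus the vanishing of these symbols. The torsor $q$ is classified by the Atiyah class $\at{\Theta} = c_{1}(\Theta) \in \coh{1}{\cat{U}_{L}}{\Omega^1_{\cat{U}_{L}}}$ coming from the Atiyah sequence \eqref{eq:26}, and this class governs the extension $0 \to F_{d-1} \to F_{d} \to \mathrm{Sym}^{d}T\cat{U}_{L} \to 0$. Consequently the symbol of an honest global function must be killed by the connecting homomorphism $\coh{0}{\cat{U}_{L}}{\mathrm{Sym}^{d}T\cat{U}_{L}} \to \coh{1}{\cat{U}_{L}}{\mathrm{Sym}^{d-1}T\cat{U}_{L}}$ obtained by contracting $\mathrm{Sym}^{d}T\cat{U}_{L} \otimes \Omega^1_{\cat{U}_{L}} \to \mathrm{Sym}^{d-1}T\cat{U}_{L}$ against $c_{1}(\Theta)$. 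I would therefore prove that this contraction map is injective for every $d \geq 1$, which forces $\sigma_{d}(f) = 0$.

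The \textbf{main obstacle} is exactly this injectivity, and it cannot be dodged by a naive vanishing of $\coh{0}{\cat{U}_{L}}{\mathrm{Sym}^{d}T\cat{U}_{L}}$: these groups are genuinely nonzero, since over the Dolbeault fibre $\Phi^{0}_{L}$ the very same symbols are realised by the Hitchin functions on $T^{*}\cat{U}_{L}$. The point is that for $\lambda = 1$ the torsor class $c_{1}(\Theta)$ is nonzero (as $\Theta$ generates $\Pic{\cat{U}_{L}} \cong \Z$ and is ample), whereas for $\lambda = 0$ one has the honest cotangent bundle with \emph{trivial} torsor class, so the obstruction vanishes and the Hitchin functions persist; this is precisely the mechanism distinguishing $\Phi^{1}_{L}$ from $\Phi^{0}_{L}$. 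Establishing injectivity of contraction against the positive class $c_{1}(\Theta)$ is where the hypothesis $g \geq 3$ enters, and I would carry it out following the template of the analogous computation in \cite{S} (cf. Proposition~5.4 there), combined with the ampleness of $\Theta$ on $\cat{U}_{L}$.
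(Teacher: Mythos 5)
Your framework---realise $\cat{M}'_{DR}(L)$ as the twisted cotangent bundle $\cat{C}(\Theta)$ via \eqref{eq:29}, filter $q_*\struct{\cat{M}'_{DR}(L)}$ by fibrewise degree, and kill the symbols by the obstruction against the torsor class---is indeed the right skeleton, and your observation that $\coh{0}{\cat{U}_L}{\mathrm{Sym}^d T\cat{U}_L}$ is \emph{not} zero (Hitchin's hamiltonians), so that no naive vanishing argument is available, is exactly the correct diagnosis of where the difficulty sits. But the proposal has a genuine gap at its only hard point: the injectivity of the contraction map $\coh{0}{\cat{U}_L}{\mathrm{Sym}^d T\cat{U}_L} \to \coh{1}{\cat{U}_L}{\mathrm{Sym}^{d-1}T\cat{U}_L}$ against $c_1(\Theta)$ is asserted, not proved, and neither of the two ingredients you invoke for it can supply a proof. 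Ampleness alone is insufficient: for $Y=\p^1$ with the ample generator $\struct{Y}(1)$, the target $\coh{1}{Y}{\mathrm{Sym}^{d-1}TY}=\coh{1}{Y}{\struct{Y}(2d-2)}$ vanishes for all $d\geq 1$ while the source $\coh{0}{Y}{\struct{Y}(2d)}$ does not, so contraction with the ample class has large kernel; correspondingly the twisted cotangent bundle of $\p^1$ is an affine surface carrying many nonconstant functions. Thus the injectivity is a special property of $\cat{U}_L$, and proving it requires Hitchin's identification of $\coh{0}{\cat{U}_L}{\mathrm{Sym}^d T\cat{U}_L}$ with the symbols of the Hitchin map together with an actual computation of their cup product with $c_1(\Theta)$; none of this is carried out or correctly referenced.

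Moreover, the reference you point to for this computation is the wrong one: Proposition 5.4 of \cite{S} is the isomorphism $\cat{C}(\Theta)\cong \cat{M}'_{DR}(L)$ (it is precisely what yields \eqref{eq:29}); it contains no statement about contraction maps. The result that closes your gap is \cite{BR}, Theorem 4.3 (equivalently \cite{S}, Theorem 5.7), which states $\coh{0}{\cat{C}(\Theta)}{\struct{\cat{C}(\Theta)}}=\C$---and that citation, combined with \eqref{eq:29}, is the paper's entire proof. In effect you have correctly reconstructed the reduction that is internal to the proof of the cited theorem (descent to $\cat{U}_L$ plus the symbol obstruction), but left its core unestablished. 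To be complete, either cite \cite{BR}, Theorem 4.3 or \cite{S}, Theorem 5.7 outright, at which point the preparatory filtration argument is unnecessary, or genuinely carry out the Hitchin-symbol cup-product computation.
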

\begin{proof}
In view of 
the above isomorphism \eqref{eq:29} of varieties, it is 
enough to 
show that $\cat{C}(\Theta)$ does not have any 
non-constant algebraic function.  Now, from \cite{BR}, 
Theorem 4.3, p.797 (also see \cite{S}, Theorem 5.7),
we have
\begin{equation*}
\label{eq:31}
\coh{0}{\cat{C}(\Theta)}{\struct{\cat{C}(\Theta)}} =\C.
\end{equation*}
\end{proof}

\begin{proposition}
\label{cor:2}
$\coh{0}{\Phi^1_L}{\struct{\Phi^1_L}} =\C.$
\end{proposition}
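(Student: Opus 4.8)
The plan is to deduce the statement from Lemma \eqref{thm:5} by a density argument, exploiting that $\cat{M}'_{DR}(L)$ is a nonempty Zariski open subset of $\Phi^1_L$. First I would record that restriction of regular functions gives a map
\begin{equation*}
\coh{0}{\Phi^1_L}{\struct{\Phi^1_L}} \to \coh{0}{\cat{M}'_{DR}(L)}{\struct{\cat{M}'_{DR}(L)}},
\end{equation*}
which is injective as soon as $\cat{M}'_{DR}(L)$ is dense in the reduced variety $\Phi^1_L$: if a regular function $f$ restricts to $0$ on a dense open subset, then $f$ vanishes identically. Since Lemma \eqref{thm:5} identifies the target with $\C$, the proposition follows once density is established.

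So the core of the argument is to show that $\cat{M}'_{DR}(L)$ is dense in $\Phi^1_L$, equivalently that the locus of stable $1$-connections whose underlying bundle fails to be stable does not fill up a component. I would get this from irreducibility. On the one hand, $\cat{M}'_{DR}(L)$ is itself irreducible: by Proposition \eqref{prop:1} it is an $\Omega^1_{\cat{U}_{L}}$-torsor over $\cat{U}_L$, hence an affine bundle over the irreducible base $\cat{U}_L$, and the total space of an affine bundle over an irreducible variety is irreducible. On the other hand $\Phi^1_L = \cat{M}_{DR}(X,n,L)$ is smooth, so its irreducible components coincide with its connected components; since $\cat{M}_{DR}(X,n,L)$ is connected (for instance via the Riemann--Hilbert correspondence, which identifies it analytically with the irreducible character variety of $SL_n$-representations of $\pi_1(X)$), it is irreducible. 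As any nonempty Zariski open subset of an irreducible variety is dense, $\cat{M}'_{DR}(L)$ is dense in $\Phi^1_L$.

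Granting density, I would conclude as follows: given $f \in \coh{0}{\Phi^1_L}{\struct{\Phi^1_L}}$, its restriction to $\cat{M}'_{DR}(L)$ is constant, say equal to $c$, by Lemma \eqref{thm:5}; then $f - c$ vanishes on the dense subset $\cat{M}'_{DR}(L)$, hence on all of $\Phi^1_L$, so $f \equiv c$. Therefore $\coh{0}{\Phi^1_L}{\struct{\Phi^1_L}} = \C$. I expect the density step to be the main obstacle: constancy of global functions on the stable-bundle locus is already in hand, but transferring it to the whole fibre $\Phi^1_L$ requires controlling the connections with unstable underlying bundle, and the clean way to do so is through irreducibility (equivalently connectedness) of the full de Rham moduli space with fixed determinant, rather than through a direct codimension estimate on the complement.
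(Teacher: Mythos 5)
Your proposal is correct and follows essentially the same route as the paper: the paper's own proof is precisely the observation that $\cat{M}'_{DR}(L)$ is a Zariski open dense subvariety of $\Phi^1_L$, so that Lemma \eqref{thm:5} forces every regular function on $\Phi^1_L$ to be constant. The only difference is that the paper asserts density without proof, whereas you supply a justification (irreducibility of $\Phi^1_L$ via smoothness plus connectedness of the fixed-determinant de Rham space), which is a reasonable filling-in of a step the paper leaves implicit.
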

\begin{proof}
Since $\cat{M}'_{DR}(L) \subset \Phi^1_L$ is an open dense subvariety. Therefore, from above Lemma \eqref{thm:5}, conclusion follows.
\end{proof}

Next, for any $\lambda \neq 0$, $\Phi^\lambda_L$
is isomorphic to $\Phi^1_L$, by a morphism sending
$(E, \lambda, D)$ to $(E, 1, \lambda^{-1} D)$.
Thus we have
\begin{corollary}
\label{prop:4}
For any $\lambda \neq 0$,
$\coh{0}{\Phi^\lambda_L}{\struct{\Phi^\lambda_L}} =\C.$
\end{corollary}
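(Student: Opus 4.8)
The plan is to reduce the statement directly to Proposition \ref{cor:2} by transporting the ``only constant global functions'' property along the explicit scaling isomorphism already recorded just above the corollary. Fix $\lambda \neq 0$ and consider
\[
\psi_\lambda : \Phi^\lambda_L \to \Phi^1_L, \qquad (E, \lambda, D) \mapsto (E, 1, \lambda^{-1} D).
\]
First I would verify that $\psi_\lambda$ is a well-defined morphism of varieties with two-sided inverse $(E, 1, D') \mapsto (E, \lambda, \lambda D')$. The Leibniz identity \eqref{eq:2} for $D$ with parameter $\lambda$ becomes the Leibniz identity with parameter $1$ for $\lambda^{-1} D$, so $(E, 1, \lambda^{-1} D)$ is genuinely a holomorphic connection. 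Stability is preserved because rescaling $D$ by the nonzero scalar $\lambda^{-1}$ leaves the $D$-invariant subbundles and their degrees unchanged: if $D(F) \subset F \otimes \Omega^1_X$ then $(\lambda^{-1}D)(F) \subset F \otimes \Omega^1_X$ as well. The only point requiring the hypothesis on the fixed family is the determinant condition: the connection induced by $\lambda^{-1} D$ on $\bigwedge^n E$ is $\lambda^{-1} D^\lambda_L$, and the additivity \eqref{eq:18.1} forces $D^\lambda_L = \lambda D^1_L$, whence $\lambda^{-1} D^\lambda_L = D^1_L$, so the image indeed lands in $\cat{M}_{DR}(X,n,L) = \Phi^1_L$.

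Then I would invoke the functoriality of global regular functions: an isomorphism $\psi_\lambda$ of quasi-projective varieties induces an isomorphism of $\C$-algebras
\[
\psi_\lambda^* : \coh{0}{\Phi^1_L}{\struct{\Phi^1_L}} \xrightarrow{\ \sim\ } \coh{0}{\Phi^\lambda_L}{\struct{\Phi^\lambda_L}}.
\]
Combining this with Proposition \ref{cor:2}, which asserts $\coh{0}{\Phi^1_L}{\struct{\Phi^1_L}} = \C$, yields the claim immediately.

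I do not expect any genuine obstacle here: the entire substance of the argument sits in Proposition \ref{cor:2} (and hence in Lemma \ref{thm:5} together with the identification \eqref{eq:29}), and the corollary is a purely formal consequence of the $\C^*$-action that rescales $D$. The only place demanding slight care is the check that $\psi_\lambda$ respects the prescribed determinant data, which is exactly where \eqref{eq:18.1} enters; everything else is the transport of a property along an isomorphism.
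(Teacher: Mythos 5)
Your proposal is correct and is essentially the paper's own argument: the paper proves the corollary by exactly this rescaling isomorphism $(E,\lambda,D) \mapsto (E,1,\lambda^{-1}D)$ between $\Phi^\lambda_L$ and $\Phi^1_L$, and then invokes Proposition \eqref{cor:2}. Your write-up is in fact more detailed than the paper's, since you also verify stability and the determinant compatibility via \eqref{eq:18.1}, which the paper leaves implicit.
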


Now, we are in position to describe the algebraic 
functions on the moduli space  $\cat{M}_{Hod}(L)$,
which generalizes a result (see \cite{BHR} Theorem 4.1, p.1542)  to higher
rank case on the compact Riemann surface $X$.

\begin{theorem}
\label{thm:6}
Any algebraic function on $\cat{M}_{Hod}(L)$ will factor
through the map 
\begin{equation*}
\Phi_L: \cat{M}_{Hod}(L) \to \A^1
\end{equation*}
defined in \eqref{eq:20}.
\end{theorem}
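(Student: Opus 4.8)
The plan is to reduce the claim about arbitrary algebraic functions on $\cat{M}_{Hod}(L)$ to what we already know on the individual fibres $\Phi^\lambda_L$ of the morphism $\Phi_L$. Concretely, an algebraic function $F$ factors through $\Phi_L$ if and only if $F$ is constant on each fibre $\Phi^\lambda_L = \Phi^{-1}_L(\lambda)$; once that is established we obtain an induced set-theoretic map $\A^1 \to \C$, and we must check it is itself algebraic. So the proof splits into two parts: first, prove that $F$ restricted to each fibre is constant, and second, assemble these fibrewise constants into a regular function on $\A^1$.

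For the first part, I would treat the two cases $\lambda \neq 0$ and $\lambda = 0$ separately, since the structure of the fibres differs. For $\lambda \neq 0$, Corollary \eqref{prop:4} tells us that $\coh{0}{\Phi^\lambda_L}{\struct{\Phi^\lambda_L}} = \C$, so any algebraic function on the total space, restricted to $\Phi^\lambda_L$, is forced to be a constant $c(\lambda) \in \C$. The only genuinely separate case is $\lambda = 0$, i.e. the Dolbeault fibre $\Phi^0_L = \cat{M}_{Dol}(X,n,L)$, which is not covered by Corollary \eqref{prop:4}. Here I would invoke the analogous vanishing statement for global functions on the moduli space of stable Higgs bundles with fixed determinant; that $\Phi^0_L$ carries only constant algebraic functions is the natural companion to Lemma \eqref{thm:5}, and follows from the same circle of ideas (the $\C^*$-action contracting the space to the nilpotent cone / the underlying $\cat{U}_L$ having $\coh{0}{\cat{U}_L}{\struct{\cat{U}_L}} = \C$ together with the torsor/affine-fibration structure). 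Thus $F$ is constant on \emph{every} fibre, and we get a well-defined function $f : \A^1 \to \C$ with $F = f \circ \Phi_L$ as maps of sets.

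For the second part, I must upgrade $f$ from a set-theoretic function to a regular (algebraic) one on $\A^1$. The cleanest way is to exhibit an algebraic \emph{section} $s : \A^1 \to \cat{M}_{Hod}(L)$ of $\Phi_L$, so that $f = f \circ \Phi_L \circ s = F \circ s$ is a composition of morphisms and hence algebraic. Such a section should be constructible by choosing a fixed stable bundle $E_0 \in \cat{U}_L$ and using the linear structure on the fibres $p_0^{-1}(E_0)$ coming from the additivity assumption \eqref{eq:18.1}: the family $(L,\lambda,D^\lambda_L)_{\lambda}$ was chosen precisely so that $D^{\lambda_1}_L + D^{\lambda_2}_L = D^{\lambda_1+\lambda_2}_L$, which lets one build a $\lambda$-linearly-varying connection $D_0^\lambda$ on $E_0$ inducing $D^\lambda_L$ on the determinant, giving $s(\lambda) = (E_0, \lambda, D_0^\lambda)$ with $\Phi_L \circ s = \mathrm{id}_{\A^1}$.

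The main obstacle I anticipate is the $\lambda = 0$ case of the fibrewise-constancy step. Unlike the $\lambda \neq 0$ fibres, which are all isomorphic to the de Rham moduli space handled by Lemma \eqref{thm:5} and Corollary \eqref{prop:4}, the Dolbeault fibre is a genuinely different object and its global-function vanishing must be argued on its own terms; one must also ensure that the constant values $c(\lambda)$ glue to a function that is regular across $\lambda = 0$ and not merely on $\A^1 \setminus \{0\}$, which is exactly what the global section $s$ guarantees. A secondary technical point is verifying that $s$ really is an algebraic morphism and lands in the locus of \emph{stable} $\lambda$-connections with the prescribed determinant for all $\lambda$; this is where the hypotheses on $(L,\lambda,D^\lambda_L)_{\lambda}$ and the openness of the stable locus do the work.
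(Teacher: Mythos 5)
There is a genuine gap, and it sits exactly where you flagged ``the main obstacle'': your treatment of the fibre $\Phi^0_L$. You propose to prove that $F$ is constant on $\Phi^0_L = \cat{M}_{Dol}(X,n,L)$ by invoking a vanishing statement $\coh{0}{\Phi^0_L}{\struct{\Phi^0_L}} = \C$ as a companion to Lemma \eqref{thm:5}. No such statement is true: the moduli space of stable Higgs bundles with fixed determinant carries the Hitchin map, whose components (the coefficients of the characteristic polynomial of the Higgs field, valued in $\bigoplus_{i=2}^{n}\coh{0}{X}{(\Omega^1_X)^{\otimes i}}$) are non-constant algebraic functions; indeed $\Phi^0_L$ is an algebraically completely integrable system and its ring of global functions is huge. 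The heuristics you offer also point the wrong way: unlike the de~Rham fibre, which by Proposition \eqref{prop:1} is a torsor under $\Omega^1_{\cat{U}_L}$ with no natural zero section, the projection of the stable-bundle locus of $\Phi^0_L$ to $\cat{U}_L$ is an honest vector bundle (fibre $\coh{0}{X}{\Omega^1_X \otimes \ad{E}}$, with zero section the nilpotent-free point $\theta = 0$), and total spaces of vector bundles have many functions coming from symmetric powers of the dual. This asymmetry between $\lambda = 0$ and $\lambda \neq 0$ is precisely what makes the theorem nontrivial: constancy of $F$ on $\Phi^0_L$ is a \emph{consequence} of $F$ being defined on all of $\cat{M}_{Hod}(L)$, not a fibrewise fact that can be established separately.

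The repair is to drop the fibrewise $\lambda = 0$ step entirely and argue by density, which is what the paper does: by Corollary \eqref{prop:4}, $F$ is constant on each $\Phi^\lambda_L$ with $\lambda \neq 0$, and $\coprod_{\lambda \neq 0} \Phi^\lambda_L$ is a dense open subset of $\cat{M}_{Hod}(L)$, so the factorization on this open set propagates to all of $\cat{M}_{Hod}(L)$. Your second part in fact supplies exactly the detail the paper leaves implicit. Take $E_0 \in \cat{U}_L$ stable, choose (Atiyah--Weil) a holomorphic connection $D^1_0$ on $E_0$ inducing $D^1_L$ on $\bigwedge^n E_0 \cong L$, and set $s(\lambda) = (E_0, \lambda, \lambda D^1_0)$; stability of $E_0$ makes every such triple a stable $\lambda$-connection, and $\Phi_L \circ s = \mathrm{id}_{\A^1}$ provided $D^\lambda_L = \lambda D^1_L$, which needs \eqref{eq:18.1} \emph{together with} algebraic dependence of $D^\lambda_L$ on $\lambda$ (additivity alone only gives $\Q$-linearity). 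Then $F$ and $(F \circ s)\circ \Phi_L$ are regular functions on $\cat{M}_{Hod}(L)$ that agree on the dense open set $\Phi_L^{-1}(\C^*)$, hence agree everywhere; in particular $F$ is automatically constant on $\Phi^0_L$, with value $(F\circ s)(0)$. So your section construction is a genuinely useful addition, but it must be combined with the density argument rather than with a (false) vanishing theorem for the Dolbeault fibre.
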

\begin{proof}
Let $g: \cat{M}_{Hod}(L) \to \C$ be an algebraic function. Then, for each $\lambda \neq 0$, the restriction of $g$ to each fibre $\Phi^\lambda_L$ of 
$\Phi_L$ is a constant function, follows from Corollary
\eqref{prop:4}. Since 
\begin{equation}
\label{eq:32}
\coprod_{0 \neq \lambda \in \C}
\Phi^\lambda_L \hookrightarrow \cat{M}_{Hod}(L)
\end{equation}
is a dense open subset of $\cat{M}_{Hod}(L)$,
  and hence defining a function from $\A^1 \to
\A^1$.
\end{proof}

Let $\M_{Hod}(L) = \M_{Hod}(X,n,L)$ denote the
moduli 
space of isomorphism classes of polystable 
$\lambda$-connections of rank $n$ on $X$ such that 
\begin{enumerate}
\item $E$ is a holomorphic vector bundle of rank $n$ 
over $X$ with $\bigwedge^{n}E \cong L $.

\item  for every $\lambda \in \C$, the 
$\lambda$-connection on $\bigwedge^{n}E$ induced by
$D$ coincides with the given $\lambda$-connection 
$D^{\lambda}_{L}$ on $L$.
\end{enumerate}

Then 
$\M_{Hod}(L)$ is a quasi-projective variety that contains 
$\cat{M}_{Hod}(L)$ as a Zariski open dense subset, and 
we have the canonical map 
\begin{equation}
\label{eq:32.1}
\tau: \M_{Hod}(L) \to \A^1
\end{equation}
defined by sending $(E, \lambda, D) \mapsto \lambda $.
\begin{corollary}
\label{cor:3}Any algebraic function on $\M_{Hod}(L)$ will factor
through the map 
\begin{equation*}
\tau: \M_{Hod}(L) \to \A^1
\end{equation*}
defined in \eqref{eq:32.1}.
\end{corollary}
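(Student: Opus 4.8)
The plan is to deduce the statement from Theorem \ref{thm:6} together with the fact, recorded just above, that $\cat{M}_{Hod}(L)$ is a Zariski open dense subset of $\M_{Hod}(L)$. First I would take an arbitrary algebraic function $h \in \coh{0}{\M_{Hod}(L)}{\struct{\M_{Hod}(L)}}$ and restrict it to $\cat{M}_{Hod}(L)$. The restriction $h|_{\cat{M}_{Hod}(L)}$ is a regular function on $\cat{M}_{Hod}(L)$, so Theorem \ref{thm:6} provides a regular function $f$ on $\A^1$ with $h|_{\cat{M}_{Hod}(L)} = f \circ \Phi_L$.

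Next I would note that the morphism $\tau$ restricts to $\Phi_L$ on the subvariety $\cat{M}_{Hod}(L)$, since both are given by $(E,\lambda,D) \mapsto \lambda$ and $\cat{M}_{Hod}(L) \subset \M_{Hod}(L)$. Hence the two regular functions $h$ and $f \circ \tau$ on $\M_{Hod}(L)$ agree on the subset $\cat{M}_{Hod}(L)$. Because $\M_{Hod}(L)$ is a reduced separated variety and $\cat{M}_{Hod}(L)$ is dense in it, two regular functions that coincide on a dense subset must coincide everywhere; therefore $h = f \circ \tau$ on all of $\M_{Hod}(L)$, which is exactly the asserted factorization through $\tau$.

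The one point that needs a word of justification is that the function $f$ produced by Theorem \ref{thm:6} is genuinely a regular function on $\A^1$, rather than a merely set-theoretic one, so that $f \circ \tau$ is itself a legitimate regular function on $\M_{Hod}(L)$ and the density comparison above is meaningful. This holds because $\A^1$ is normal and $\Phi_L$ is a surjective morphism from the irreducible variety $\cat{M}_{Hod}(L)$, so a regular function that is constant along the fibres of $\Phi_L$ descends to a regular function on the base $\A^1$. I expect this minor regularity bookkeeping, rather than the density argument itself, to be the only real content beyond citing Theorem \ref{thm:6}.
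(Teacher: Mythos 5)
Your proposal is correct and is essentially the argument the paper intends: the corollary is stated without proof, immediately after the observation that $\cat{M}_{Hod}(L)$ is Zariski open and dense in $\M_{Hod}(L)$, precisely so that one restricts to this dense open subset, applies Theorem \ref{thm:6}, and concludes by density of $\cat{M}_{Hod}(L)$ in the reduced variety $\M_{Hod}(L)$. Your additional remark on why the descended function $f$ on $\A^1$ is genuinely regular (normality of $\A^1$) is a worthwhile clarification, since the paper's own proof of Theorem \ref{thm:6} glosses over this point.
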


\section{Automorphism of Hodge moduli space}
\label{Auto-Hodge}

In \cite{BH}, the automorphism group of moduli space
of  rank
one $\lambda$-connections have been studied. In this 
section we will obtain a short exact sequence which
contains the group of algebraic automorphism of Hodge 
moduli space.  
  The group
of algebraic automorphisms of the moduli spaces
$\cat{M}'_{Hod}(L)$  and $\cat{U}_L$ is denoted by $
\Aut{\cat{M}'_{Hod}(L)}$ and $\Aut{\cat{U}_L}$, 
respectively. Let 
\begin{equation*}
\label{eq:33}
\Aut{\cat{M}'_{Hod}(L)}_0 \subset \Aut{\cat{M}'_{Hod}(L)} ~~ \mbox{and} ~~ \Aut{\cat{U}_L}_0 \subset \Aut{\cat{U}_L}
\end{equation*}
be the connected component of the respective groups of algebraic automorphisms containing the identity 
automorphism.

Let $\Psi: \cat{M}'_{Hod}(L) \to \A^1$ be the canonical
projection defined by $(E, \lambda, D) \mapsto \lambda$.
Then, for each $\lambda \in \C$, we denote the inverse
image $\Psi^{-1}(\lambda)$ by $\Psi^\lambda$.
For each $\lambda \in \C$, we have canonical projection
\begin{equation}
\label{eq:34}
f^\lambda: \Psi^\lambda \to \cat{U},
\end{equation}
defined by $(E, \lambda, D) \to E$.
Let $\V$ consists of those 
$T \in \Aut{\cat{M}'_{Hod}(L)}_0$
such that for every $\lambda \in \C$, we have 
\begin{enumerate}
\item $T|_{\Psi^\lambda}: \Psi^\lambda \to \Psi^\lambda$.
\item $f^\lambda \circ T|_{\Psi^\lambda} = f^\lambda$.
\end{enumerate}
Then $\V$ is a subgroup of $\Aut{\cat{M}'_{Hod}(L)}_0$.
Let $\W$ denote the group of all algebraic maps from
$\C$ to $\Aut{ \cat{U}_L}_0$.  Then we have following

\begin{theorem}
\label{thm:7}
The group $\Aut{\cat{M}'_{Hod}(L)}_0$ fits in a short
exact sequence 
\begin{equation}
\label{eq:35}
0 \to \V \xrightarrow{\imath} 
\Aut{\cat{M}'_{Hod}(L)}_0 \xrightarrow{\rho}  \W \times 
\C^*
\to 0
\end{equation}
of groups.
\end{theorem}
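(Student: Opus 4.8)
The plan is to build the map $\rho$ by attaching to an automorphism $T\in\Aut{\cat{M}'_{Hod}(L)}_0$ two invariants: the factor by which it rescales $\lambda$, and the family of automorphisms it induces on the base $\cat{U}_L$ as $\lambda$ varies. First I would identify the ring of global functions. Since $\cat{M}'_{Hod}(L)$ is smooth and its complement in $\cat{M}_{Hod}(L)$ has codimension at least $2$ (the same estimate used in the proof of Theorem \eqref{thm:2}), restriction identifies $\coh{0}{\cat{M}'_{Hod}(L)}{\struct{\cat{M}'_{Hod}(L)}}$ with $\coh{0}{\cat{M}_{Hod}(L)}{\struct{\cat{M}_{Hod}(L)}}$, which by Theorem \eqref{thm:6} equals $\Phi_L^*\C[\lambda]=\C[\lambda]$. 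Hence $T$ acts by a $\C$-algebra automorphism of $\C[\lambda]$, i.e. $\Psi\circ T=a\,\Psi+b$ for some $a\in\C^*$ and $b\in\C$, so $T$ carries $\Psi^{\mu}$ isomorphically onto $\Psi^{a\mu+b}$.

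Next I would isolate the special fibre to force $b=0$. For $\mu\neq 0$ the fibre $\Psi^{\mu}$ is isomorphic to $\cat{M}'_{DR}(L)=\Psi^{1}$, which by Lemma \eqref{thm:5} has only constant global functions; by contrast $\Psi^{0}$ is the total space of the cotangent bundle $\Omega^1_{\cat{U}_L}$ (the trace-free Higgs locus) and so carries the non-constant functions pulled back along the Hitchin map. Thus $\Psi^{0}$ is the unique fibre admitting non-constant functions, whence $T(\Psi^{0})=\Psi^{0}$, giving $b=0$ and a well-defined assignment $T\mapsto a=:a_T\in\C^*$ with $\Psi\circ T=a_T\,\Psi$.

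The construction of the $\W$-component is where I expect the main obstacle to lie. For each $\mu$ the map $T$ restricts to an isomorphism $T_\mu\colon\Psi^{\mu}\to\Psi^{a_T\mu}$, and I want to descend it to an automorphism $\psi_T(\mu)\in\Aut{\cat{U}_L}_0$ with $f^{a_T\mu}\circ T_\mu=\psi_T(\mu)\circ f^{\mu}$. The intended mechanism is that, by Corollary \eqref{cor:1} and Proposition \eqref{thm:4}, $\Pic{\Psi^{\mu}}\cong\Z$ is generated by $(f^{\mu})^*\Theta$, which is nef but trivial precisely along the fibres of $f^{\mu}$; an automorphism preserves $\Pic\cong\Z$ together with the nef generator, hence the associated fibration, and therefore must send fibres of $f^{\mu}$ onto fibres of $f^{a_T\mu}$. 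Making this rigorous is the delicate step: one must verify that for $\mu\neq 0$ the semiample fibration attached to $(f^{\mu})^*\Theta$ really is $f^{\mu}$ (equivalently $\coh{0}{\Psi^{\mu}}{((f^{\mu})^*\Theta)^{\otimes k}}=\coh{0}{\cat{U}_L}{\Theta^{\otimes k}}$, the torsor analogue of Lemma \eqref{thm:5}, which fails at $\mu=0$ owing to the Hitchin sections), and then recover $\psi_T(0)$ by algebraic continuation, using that $\Aut{\cat{U}_L}_0$ is an algebraic group. The resulting assignment $\mu\mapsto\psi_T(\mu)$ is an algebraic map $\C\to\Aut{\cat{U}_L}_0$, i.e. an element of $\W$, landing in the identity component because $T\in\Aut{\cat{M}'_{Hod}(L)}_0$; I then set $\rho(T)=(\psi_T,a_T)$ and check that this is a group homomorphism.

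Finally I would verify exactness. Unwinding the definitions, $\rho(T)=(\mathrm{id},1)$ forces $a_T=1$ (so $T$ preserves each $\Psi^{\mu}$) and $f^{\mu}\circ T_\mu=f^{\mu}$ for all $\mu$, which are exactly conditions (1) and (2) defining $\V$; hence $\Ker{\rho}=\V$ and $\imath$ is the inclusion. For surjectivity, the scaling $S_a\colon(E,\lambda,D)\mapsto(E,a\lambda,aD)$ is a well-defined automorphism, since it sends a $\lambda$-connection to an $a\lambda$-connection and preserves the fixed-determinant family by \eqref{eq:18.1}; it fixes the underlying bundle and satisfies $\rho(S_a)=(\mathrm{id},a)$. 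On the other hand any $\psi\in\W$ is realized by acting on each $\Psi^{\mu}$ through the canonical lift of $\psi(\mu)\in\Aut{\cat{U}_L}_0$: cotangent bundles and connections being functorial, every $\Psi^{\mu}$ is $\Aut{\cat{U}_L}_0$-equivariant over $\cat{U}_L$, and the lifts assemble into an automorphism with $\rho$-image $(\psi,1)$. Composing these two families shows $\rho$ is onto $\W\times\C^*$, which completes the short exact sequence.
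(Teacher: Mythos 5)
Your proposal has the same overall skeleton as the paper's proof: extract from $T$ the scalar $\gamma\in\C^*$ by which it rescales $\lambda$, descend the restrictions $T|_{\Psi^\lambda}$ to automorphisms $\Upsilon(\lambda)$ of $\cat{U}_L$, set $\rho(T)=(\Upsilon,\gamma)$, and identify $\Ker{\rho}=\V$. The two non-formal steps, however, are handled differently. For the fibre-permutation step the paper argues fibrewise: Corollary \eqref{prop:4} makes $\Psi\circ T|_{\Psi^\lambda}$ constant for $\lambda\neq 0$, the same statement applied to $T^{-1}$ handles $\lambda=0$, and the assertion (made without proof) that $\Psi^0\not\cong\Psi^\lambda$ forces $\gamma_0(0)=0$. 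You instead compute the whole function ring, $\coh{0}{\cat{M}'_{Hod}(L)}{\struct{\cat{M}'_{Hod}(L)}}=\C[\lambda]$, via the codimension-two extension plus Theorem \eqref{thm:6}, so that $T$ acts on it by $\lambda\mapsto a\lambda+b$, and you kill $b$ by noting that $\Psi^0$ carries non-constant (Hitchin) functions while $\Psi^\mu$, $\mu\neq 0$, carries none. Both routes rest on the same input (Corollary \eqref{prop:4}), but yours is cleaner and actually supplies the reason, missing from the paper, why no fibre can be interchanged with $\Psi^0$. For the descent step the paper merely records the fibre structure of $\psi^\lambda$ in \eqref{eq:39} and then asserts the existence and uniqueness of $\Upsilon(\lambda)$ making \eqref{eq:41} commute; you propose an actual mechanism (the generator $(f^\mu)^*\Theta$ of $\Pic{\Psi^\mu}\cong\Z$ and the fibration it determines), and you correctly flag this as the delicate point.

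That said, two of your steps have genuine gaps, and they occur exactly where the paper is also silent. First, your recovery of $\psi_T(0)$ by ``algebraic continuation'' is not automatic: a morphism from $\C^*$ to an algebraic group need not extend across the origin (consider $\lambda\mapsto\lambda$ as a map to $\C^*$), so you must either prove the family $\mu\mapsto\psi_T(\mu)$ has no pole at $0$, or treat $\mu=0$ directly by showing that an automorphism of $\Psi^0$ (the total space of $\Omega^1_{\cat{U}_L}$) arising as $T|_{\Psi^0}$ carries fibres of $f^0$ to fibres --- which, as you note, your semiample argument does not give at $\mu=0$. Second, for surjectivity onto the $\W$-factor, the claim that every $\Psi^\mu$ is ``$\Aut{\cat{U}_L}_0$-equivariant over $\cat{U}_L$'' is unjustified: connections are functorial for maps of the curve $X$, not for abstract automorphisms of the moduli space, so there is no canonical lift of $\psi(\mu)\in\Aut{\cat{U}_L}_0$ to $\Psi^\mu$; any lift would have to be manufactured, e.g.\ through the identification $\cat{C}(\Theta)\cong\cat{M}'_{DR}(L)$ of \eqref{eq:29} together with $\Phi^*\Theta\cong\Theta$ for $\Phi\in\Aut{\cat{U}_L}_0$, and then shown to vary algebraically in $\mu$. (The paper dismisses this with ``straightforward to check''.) A minor further point: the additivity hypothesis \eqref{eq:18.1} only yields $D^{q\lambda}_L=qD^{\lambda}_L$ for rational $q$, so your scaling automorphism $S_a$ preserves the fixed-determinant condition only if the family $\lambda\mapsto D^\lambda_L$ is assumed $\C$-linear, which is surely intended but should be said.
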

\begin{proof}
Let $T \in \Aut{\cat{M}'_{Hod}(L)}_0$. From Proposition
\eqref{prop:4}, $\Psi^{\lambda}$ does not
admit any nonconstant algebraic function, for any $\lambda \in \C^*$. Hence the composition
\begin{equation}
\label{eq:36}
\Psi^\lambda \xrightarrow{T|\Psi^\lambda} 
\cat{M}'_{Hod}(L) \xrightarrow{\Psi}  \A^1,
\end{equation}
is a constant function for all $\lambda \in \C^*$.
Note that for every $\lambda  \in \C^*$, $\Psi^0$ is not
isomorphic to $\Psi^\lambda$, therefore
\begin{equation}
\label{eq:37}
\Psi (T (\Psi^\lambda)) \in \C^*.
\end{equation}
Now, $T$ being an automorphism, \eqref{eq:37} holds for
$T^{-1}$, and hence the composition in \eqref{eq:36}
is a constant function for every $\lambda \in \C$.
This implies that there is an algebraic automorphism
$\gamma_0: \A^1 \to \A^1$
such that the following diagram 

\begin{equation}
\label{eq:38}
\xymatrix{
\cat{M}'_{Hod}(L)  \ar[d]^\Psi \ar[r]^{T} & 
\cat{M}'_{Hod}(L) \ar[d]^\Psi \\
\A^1 \ar[r]^{\gamma_0} & \A^1\\
}
\end{equation}
commutes.
Since $\Psi^0$ is not isomorphic to $\Psi^\lambda$
for every $\lambda \in \C^*$, therefore 
$\gamma_0(0) = 0$.
Thus, $\gamma_0(z) = \gamma.z$  for some 
$\gamma \in \C^*$ and $z \in \A^1$. Clearly, $\gamma \in\C^*$ depends on $T$.

For any $\lambda \in \C$, let 
\begin{equation}
\label{eq:39}
\psi^\lambda: \Psi^\lambda \to \cat{U}_L
\end{equation}
be the canonical projection defined by sending
$(E,\lambda, D) \to E$.

For $\lambda \neq 0$ the above map is surjecive, and 
the fibres of $\psi^\lambda$ is an affine space modelled
over  $\coh{0}{X}{\Omega^1_X \otimes \ENd{E}}$ and
for $\lambda = 0$, the inverse image $(\psi^0)^{-1}(E)
= \coh{0}{X}{\Omega^1_X \otimes \ENd{E}}$.

Thus, for given $T \in \Aut{\cat{M}'_{Hod}(L)}_0$, there 
exists a unique automorphism
\begin{equation}
\label{eq:40}
\Upsilon(\lambda): \cat{U}_L \to \cat{U}_L
\end{equation}
such that the following diagram 
\begin{equation}
\label{eq:41}
\xymatrix{
\Psi^\lambda  \ar[d]^{\psi^\lambda} \ar[r]^{T|_{\Psi^
\lambda}} & 
\Psi^{\gamma \lambda } \ar[d]^{\psi^{\gamma \lambda}} \\
\cat{U}_L \ar[r]^{\Upsilon(\lambda)} & \cat{U}_L\\
}
\end{equation}
commutes, for every $\lambda \in \C$.
Also, note that $\Upsilon(\gamma) \in \Aut{\cat{U}_L}_0$.
From above phenomenon, we define
\begin{equation}
\label{eq:42}
\rho: \Aut{\cat{M}'_{Hod}(L)}_0 \to \W \times \C^*
\end{equation}
by  $T \mapsto (\Upsilon, \gamma)$.
It is straightforward to check that $\rho$ is a 
surjective group homomorphism.
Now, the kernel of $\rho$ is $\V$, which gives the 
short exact sequence \eqref{eq:35} of groups.

\end{proof}

\end{document}